\theoremstyle{plain}
\newtheorem{theorem}{Theorem}[section]
\theoremstyle{definition}
\newtheorem{definition}[theorem]{Definition}
\theoremstyle{remark}
\newtheorem{remark}[theorem]{Remark}
\newtheorem{example}[theorem]{Example}
\newcommand{\R}{\mathbb{R}}
\newcommand{\cat}[1]{\mathrm{\textbf{#1}}}
\newcommand{\colim@}[2]{%
  \vtop{\m@th\ialign{##\cr
    \hfil$#1\operator@font colim$\hfil\cr
    \noalign{\nointerlineskip\kern1.5\ex@}#2\cr
    \noalign{\nointerlineskip\kern-\ex@}\cr}}%
}
\newcommand{\colim}{%
  \mathop{\mathpalette\colim@{\rightarrowfill@\textstyle}}\nmlimits@
}
\icmltitlerunning{Topologically Attributed Graphs for Shape Discrimination}
\begin{document}

\twocolumn[
\icmltitle{Topologically Attributed Graphs for Shape Discrimination}



\icmlsetsymbol{equal}{*}

\begin{icmlauthorlist}
\icmlauthor{Justin Curry}{equal,aaa}
\icmlauthor{Washington Mio}{equal,bbb}
\icmlauthor{Tom Needham}{equal,bbb}
\icmlauthor{Osman Berat Okutan}{equal,ccc}
\icmlauthor{Florian Russold}{equal,ddd}
\end{icmlauthorlist}

\icmlaffiliation{aaa}{Department of Mathematics, SUNY Albany, Albany, NY, USA}
\icmlaffiliation{bbb}{Department of Mathematics, Florida State University, Tallahassee, FL, USA}
\icmlaffiliation{ccc}{Max Planck Institute for Mathematics in the Sciences, Leipzig, Germany}
\icmlaffiliation{ddd}{Institute of Geometry, Graz University of Technology, Graz, Austria}

\icmlcorrespondingauthor{Tom Needham}{tneedham@fsu.edu}

\icmlkeywords{Topological Data Analysis, Reeb Graphs, Persistent Homology, Graph Neural Networks, Gromov-Hausdorff Distance}

\vskip 0.3in
]



\printAffiliationsAndNotice{\icmlEqualContribution} 

\begin{abstract}
In this paper we introduce a novel family of attributed graphs for the purpose of shape discrimination.
Our graphs typically arise from variations on the Mapper graph construction, which is an approximation of the Reeb graph for point cloud data.
Our attributions enrich these constructions with (persistent) homology in ways that are provably stable, thereby recording extra topological information that is typically lost in these graph constructions. We provide experiments which illustrate the use of these invariants for shape representation and classification. In particular, we obtain competitive shape classification results when using our topologically attributed graphs as inputs to a simple graph neural network classifier.
\end{abstract}

\section{Introduction}

Topological Data Analysis studies finite spaces by associating topological invariants to them that serve as intuitive structural summaries for unsupervised analysis or as nonlinear featurizations for downstream supervised learning applications. The most common such invariant is a persistence diagram, which, roughly, gives a concise representation of homological features that are apparent in the data at multiple scales. Graphical topological summaries form another important collection of tools for representing data; these include merge trees,  Mapper graphs, and, their continuous counterparts, Reeb graphs. 

In this paper, we combine Mapper graphs with persistence diagrams in order to define new, highly discriminative shape representations. The main ideas of these constructions are illustrated in Figures \ref{fig:torus-DRG} and \ref{fig:intro_example}. Roughly, the Mapper graph gives a large-scale structural summary of connected components, while the persistence diagram attributions encode finer-scale topological structure.

The structure of the paper is as follows. In Sections \ref{sec:attributed-graphs} and \ref{sec:DRGs}, we introduce precise mathematical formalism for attributed graphs and their continuous analogues---decorated Reeb graphs---in the language of category theory. We then introduce novel constructions of topologically attributed graphs and prove their stability in Section \ref{sec:persistent_decorations}. Sections \ref{sec:computation} and \ref{sec:examples} are devoted to computational considerations; in particular, how our topologically attributed graphs are constructed and compared in practice. We also provide a classification experiment, where we show that our constructions achieve competitive shape classification performance when they are fed as inputs into a simple graph neural network.

\begin{figure}
    \centering
    \includegraphics[width = 0.3\textwidth]{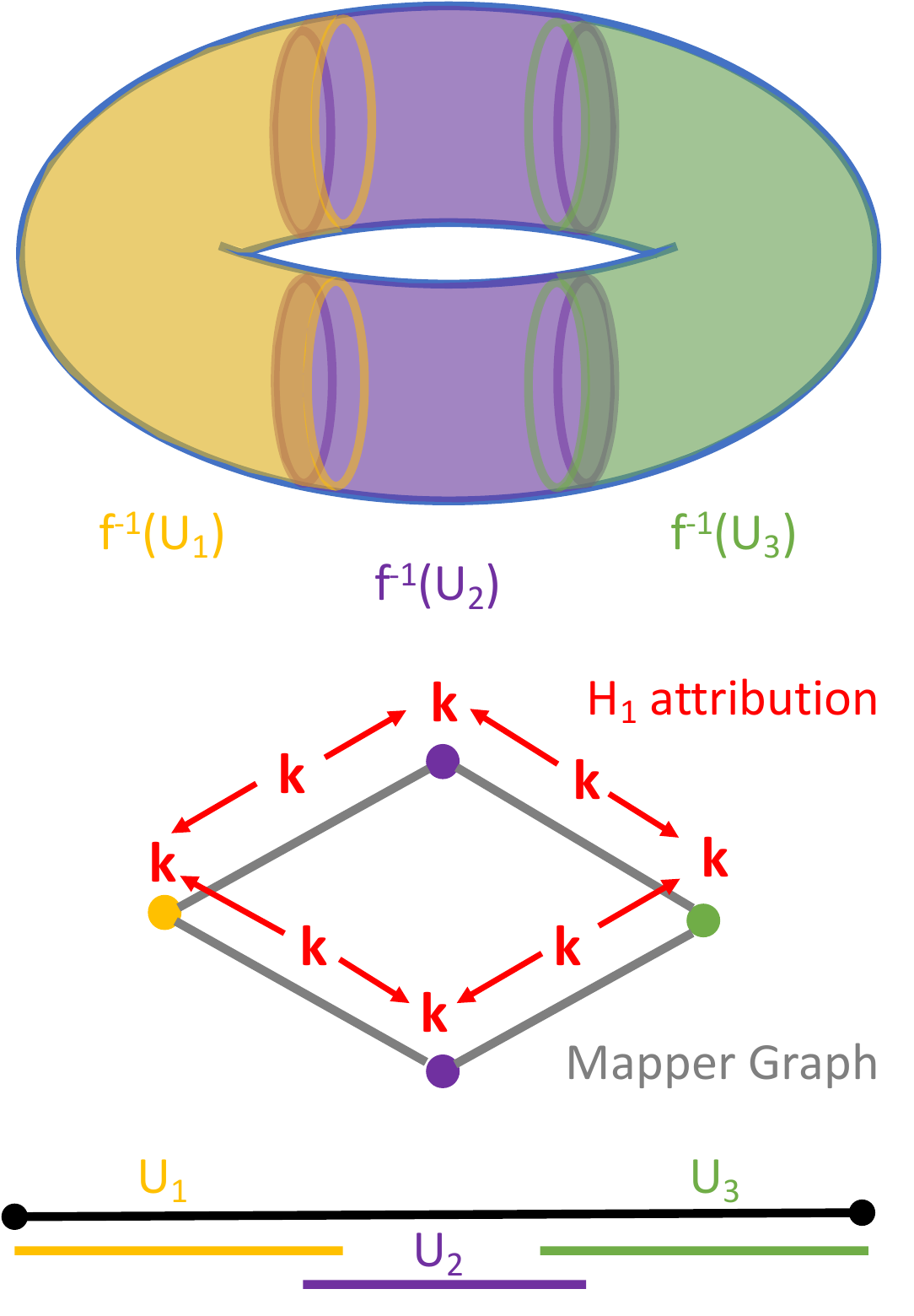}
    \caption{The Mapper graph construction applied to the torus admits a natural decoration with homology (with coefficients in a field $k$).}
    \label{fig:torus-DRG}
\end{figure}

\section{Categorically Attributed Graphs}\label{sec:attributed-graphs}

We view a simple undirected graph $G=(V,E)$  as a category\footnote{See \cite{riehl2017category} for a good introduction to category theory.} $\cat{G}$, with objects corresponding to elements of $V\cup E$ and with a unique morphism $e \to v$ whenever a node $v$ is incident to an edge $e$.
This makes $\cat{G}$ equivalent to a poset $(G,\leq)$ where $e \leq v$.

\begin{definition}[Attributed Graph]
An \emph{attributed graph} is a functor $F\colon \mathbf{G} \rightarrow \mathbf{C}$ which assigns to each vertex $v$ and each edge $e$ of $G$ objects $F(v)$ and $F(e)$ in $\mathbf{C}$, along with a morphism $F(e \leq v): F(e) \to F(v)$ in $\cat{C}$, provided $e\leq v$.
\end{definition}

\begin{example}
    Suppose we are considering a social media platform, such as Facebook.
    Users correspond to nodes of a graph $G$ and edges correspond to friendships.
    We can define an attribution valued in the category $\cat{Set}$ as follows: Let $F(v)$ to be the set of interests or pages that $v$ follows and let $F(e)$ be the intersection of these interests or pages. The inclusion $F(e)\hookrightarrow F(v)$ makes this an attribution.
\end{example}

\paragraph{Attributed Graphs for Representing Shapes.} In this paper, we are primarily interested in attributed graphs which capture aspects of the geometry and/or topology of a given space (or finite approximation thereof). As such, we will mostly work with attributions that come from homology\footnote{See \cite{hatcher2002algebraic} for a textbook treatment.}, which is an attribution valued in $\cat{Vec}$---the category of vector spaces and linear maps over a field $k$. A first example of the type of attributed graph we are interested in is as follows.

\begin{example}[Decorated Mapper Graphs]\label{ex:DMG}
Let $X$ be a compact space and assume $f\colon X \rightarrow \mathbb{R}$ is a continuous map. 
Let $\mathcal{U}=(U_i)_{i\in I}$ be a cover of $\mathbb{R}$ with no (non-empty) triple intersections. 
We can pullback this cover along $f$ to obtain  $f^{-1}(\mathcal{U})$ as a cover of $X$, where each cover element $f^{-1}(U_i)$ is further refined into its connected components. 
The nerve of this cover defines the \emph{Mapper graph}  $\mathcal{M}_{\mathcal{U},f}$ of $X\xrightarrow{f} \mathbb{R}$ with respect to $\mathcal{U}$ \cite{singh2007topological}. 
It has vertices $V$ corresponding to components $C$ of $f^{-1}(U_i)$ and edges $E=\{C\cap C'\hspace{2pt}|\hspace{2pt} C,C'\in f^{-1}(\mathcal{U}) \text{ and }C\cap C'\neq\emptyset\}$ corresponding to non-empty intersections of these components. 
The \emph{decorated Mapper graph (DMG)} $F\colon \mathcal{M}_{\mathcal{U},f}\rightarrow \mathbf{Vec}$ augments the Mapper graph by assigning to each component $C\in V$ and $C\cap C'\in E$ the homology (with coefficients in a field $k$) of the corresponding components, i.e.~$F(C)\coloneqq H_n(C)$ and $F(C\cap C')\coloneqq H_n(C\cap C')$.
The inclusion $C\cap C'\subseteq C$ of components induces a map in homology $F(C\cap C' \leq C)\coloneqq H_n(C\cap C'\subseteq C)$. 

An example of a DMG is shown in Figure \ref{fig:torus-DRG}.
\end{example}

\paragraph{Discrete Shape Representations and TDA.} In our next example, we consider an extension of the DMG concept which applies to finite spaces.
As a reminder, Topological Data Analysis (TDA) provides a tool for homology inference that replaces homology with \emph{persistent homology}; we assume that the reader is familiar with the basic concepts of TDA, but review one key construction below.

\begin{figure}
    \centering
    \includegraphics[width = 0.48\textwidth]{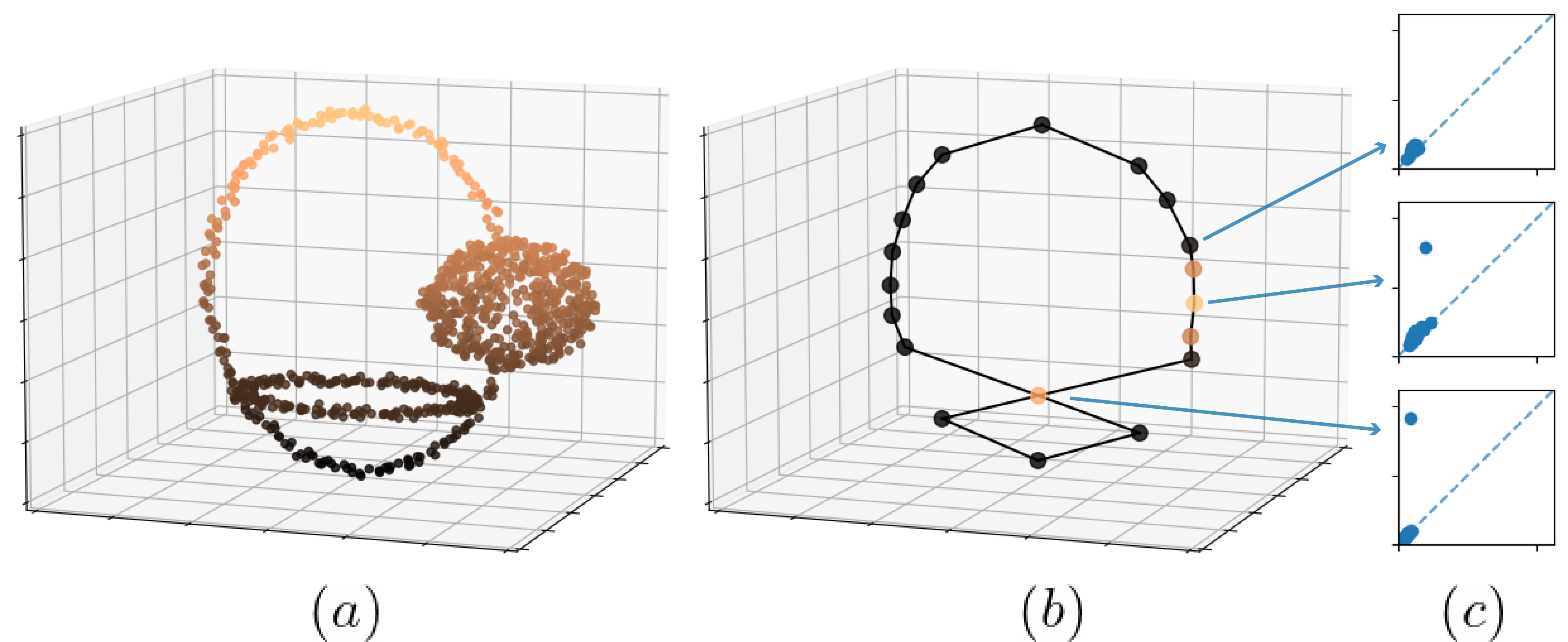}
    \caption{Persistent Decorated Mapper Graph. (a) A synthetic point cloud data set, nodes colored by the value of its filtration function, height along the $z$-axis. (b) A Mapper graph of the dataset. (c) The nodes of the Mapper graph are attributed with persistence diagrams; each node corresponds to a connected component of a level set of the dataset, and the (degree-1) persistent homology of this subset gives the attribution. Nodes of the Mapper graph are colored by total persistence (i.e., $\sum (d_\mathrm{I} - b_i)$, where the sum is over points $(b_i,d_\mathrm{I})$ in the diagram).}
    \label{fig:intro_example}
\end{figure}

\begin{definition}[Rips Persistence]\label{defn:rips}
    Given a finite metric space $(X,d_X)$ the \emph{Vietoris-Rips complex at scale $r$} is the simplicial complex $VR(X,r)$ whose simplices consist of subsets $\sigma\subseteq X$ where $d_X(x,x')\leq 2r$ for all $x,x'\in\sigma$.
    Notice that if $r\leq s$, then there is an inclusion $VR(X,r)\subseteq VR(X,s)$.
    Passing to the geometric realization of these complexes and the induced continuous maps makes $VR(X):=VR(X,\bullet)$ into a functor from $(\R,\leq)$---the poset category of the reals---to $\cat{Top}$---the category of topological spaces and maps, i.e. $VR(X)$ is an object in the functor category $\cat{Top}^{\R}$.
    Applying homology $H_n$ then defines the Rips persistent homology $PH_n(X)$, which is an object in $\cat{Vec}^{\mathbb{R}}$.
    This latter object can then be faithfully encoded as a persistence diagram, which records births and deaths of homological features across scales.
\end{definition}

\begin{definition}[Persistent DMGs]\label{def:persistentDMG}
Given a finite metric space $(X,d_X)$ and function $f:X \to \R$, we can construct the (discrete version of the) Mapper graph $\mathcal{M}_{\mathcal{U},f}$ in a manner similar to Example \ref{ex:DMG} by inferring components via a chosen clustering algorithm applied to $f^{-1}(U_i)$. 
The clusters then replace the components $C\in f^{-1}(U_i)$ in the construction above. 
This allows us to define a \emph{persistent Decorated Mapper Graph} $F\colon \mathcal{M}_{\mathcal{U},f}\rightarrow \mathbf{Vec}^\mathbb{R}$ that assigns to each vertex and each edge---corresponding to a cluster and an intersection of clusters, respectively---the persistent homology of each, i.e.~$F(C)\coloneqq PH_n(C)$ and $F(C\cap C' \leq C)\coloneqq PH_n(C\cap C'\subseteq C)$.

This structure is illustrated in Figure \ref{fig:intro_example}. In this figure, only the node attributions are included.
\end{definition}

Persistent DMGs give intuitive and informative summaries of discrete shapes, and are the main object that we use in applications below (see Sections \ref{sec:computation} and \ref{sec:examples}). The next part of the paper is concerned with establishing basic theory for these objects, focusing on stability.

\section{Decorated Reeb Graphs}\label{sec:DRGs}

In this section, we introduce continuous versions of the Mapper graphs and attributions described above. 

\paragraph{Reeb Graphs.} In practice, choosing the cover and clustering schema for Mapper can be an art with sometimes hard to interpret and unstable behavior.
These defects are then inherited by the decoration process.
These issues have been mostly handled \cite{munch2016convergence,carriere2018structure} by viewing Mapper graphs as discrete approximations of the Reeb graph~\cite{reeb1946points}, which we review next.

\begin{definition}\label{def:Reeb_graph}
    A \emph{Reeb graph} is a pair $(R,f)$ consisting of a compact 1-dimensional geometric simplicial complex $R$ and a piecewise linear  map $f:R \to \R$. 
    A metric $d_f$ on $R$ is defined by
    $d_f(x,x') = \inf_\gamma \max f \circ \gamma - \min f \circ \gamma$,
    where the infimum is over all paths from $x$ to $x'$. 
\end{definition}

\begin{example}\label{ex:Reeb_graph}
    Let $X$ be a compact geometric simplicial complex and let $f:X \to \R$ be a continuous piecewise linear map. 
    The Reeb graph associated to $f:X\to \R$ starts by defining $R$ to be set of equivalence classes $X/\sim$, where $x \sim x'$ if $x$ and $x'$ lie in the same connected component of $f^{-1}(v)$. 
    Since $f$ is constant on equivalence classes, it factors to define a map $\hat{f}:R \to \R$ where $f=\hat{f}\circ q$ and $q$ is the quotient map $q:X\to X/\sim$.
    The pair $(R,\hat{f})$ then defines a Reeb graph in the sense of Definition \ref{def:Reeb_graph}.
\end{example}

We now make geometric graphs the domain of attribution.

\begin{definition}\label{def:cts-attribution}
    Let $R$ be a compact 1D geometric complex and let $\mathcal{O}(R)$ be its poset category of open sets.
    A \emph{continuous attribution} is a functor $F:\mathcal{O}(R) \to \cat{C}$.
\end{definition}

\begin{example}[Decorated Reeb Graph (DRG)]\label{ex:DRG}
    When $(R,f)$ is a Reeb graph and $\cat{C} = \cat{Vec}$, we refer to a continuous attribution $F:\mathcal{O}(R) \to \cat{Vec}$ as a \emph{decorated Reeb graph} or \emph{DRG}.
    Specifically, let $(R,\hat{f})$ be a Reeb graph arising from the construction of Example \ref{ex:Reeb_graph}. 
    The \emph{homology decorated Reeb graph} is the continuously attributed graph that assigns to each open set $U \subset R$ in the Reeb graph the homology $F(U) = H_{n}(q^{-1}(U))$.
\end{example}

\paragraph{Categorical Reeb Graphs.} In order to prove that the Reeb graph is stable, \cite{desilva} used the following definition of a Reeb graph.

\begin{definition}\label{def:cat-Reeb-graph}
    A \emph{categorical Reeb graph} is a functor $\mathcal{R}:\mathcal{O}(\R) \to \cat{Set}$, where $\mathcal{O}(\R)$ is the category of open subsets of $\R$ ordered by inclusion, that satisfies \emph{constructibility}---there exists some finite collection of critical values $\tau=\{t_0,\ldots,t_n\}\subset \R$ such that if $I\subseteq J$ are two intervals with equal intersection with $\tau$, then the map $\mathcal{R}(I\subseteq J)$ is an isomorphism---and the
    \emph{cosheaf axiom}---if $\mathcal{U}=\{U_i\}_{i\in I}$ is a cover of an open set $U\in \mathcal{O}(\R)$ then the universal map from the colimit $\varinjlim \mathcal{R}(U_i)\to \mathcal{R}(U)$ is an isomorphism.
\end{definition}

\begin{example}
    Every Reeb graph $(R,f)$ gives rise to a categorical Reeb graph $\mathcal{R}$ via $\mathcal{R}(U) := \pi_0(f^{-1}(U))$, where
    $\pi_0: \cat{Top} \to \cat{Set}$ is the path components functor.
\end{example}

We now unify Definitions \ref{def:cts-attribution} and \ref{def:cat-Reeb-graph} to provide an alternative description of Example \ref{ex:DRG}.
This involves engineering a category that can track both components and homology vector spaces.

\begin{definition}
    Let $\cat{PVec}$ denote the category of \emph{discretely parameterized vector spaces}. 
    Objects of $\cat{PVec}$ are functors $\sigma:S \to \cat{Vec}$, where $S$ is a set regarded as a discrete category and
    a morphism from $\sigma:S \to \cat{Vec}$ to $\tau:T \to \cat{Vec}$ consists of a set map $\mu:S \to T$ and a natural transformation $\sigma \Rightarrow \tau \circ \mu$.
    This category has a functor $\mathbf{dom}:\cat{PVec}\to \cat{Set}$ that sends $\sigma:S\to \cat{Vec}$ to $S$.
\end{definition}

\begin{definition}\label{def:cat-DRG}
    A \emph{categorical decorated Reeb graph} is a functor $\mathcal{F}:\mathcal{O}(\R) \to \cat{PVec}$ such that $\mathbf{dom} \circ \mathcal{F}$ satisfies the axioms of Definition \ref{def:cat-Reeb-graph}.
\end{definition}

\begin{example}
    Let $F:\mathcal{O}(R) \to \cat{Vec}$ be a DRG. 
    This gives rise to a categorical DRG $\mathcal{F}:\mathcal{O}(\R) \to \cat{PVec}$ where $\mathcal{F}(U)$ is the object of $\cat{PVec}$ that maps $\pi_0(\hat{f}^{-1}(U)) \to \cat{Vec}$ by taking a connected component of $\hat{f}^{-1}(U) \ni A \subset R$ to $F(A)$. 
\end{example}

\section{Persistent Decorations and Stability}\label{sec:persistent_decorations}

One of the main contributions of TDA has been the observation that connected components and homology are stable only when considered as part of a family of topological spaces. We now review the concepts used to quantify this.

\paragraph{Metrics.} Two of the most prominent distance metrics used in TDA are Gromov-Hausdorff distance and interleaving distance, which we now define.

\begin{definition}[Gromov-Hausdorff Distance]
    Let $(X,d_X)$ and $(Y,d_Y)$ be metric spaces. 
    The \emph{distortion} of a pair of (not necessarily continuous) maps $\Phi:X\to Y$ and $\Psi:Y\to X$ is the quantity $\text{dist}(\Phi,\Psi)$ defined by
    \begin{equation*}
        \text{sup} \{|d_X(x,x')-d_Y(y,y')| \mid (x,y),(x',y')\in C(\Phi,\Psi)\},
    \end{equation*}
    where
    \begin{equation*}
        C(\Phi,\Psi)\coloneqq \{(x,y)\in X\times Y \mid y=\Phi(x) \mbox{ or } x=\Psi(y)\}.
    \end{equation*}
    The \emph{Gromov-Hausdorff distance} between $X$ and $Y$ is
    \begin{equation*}
        d_{\mathrm{GH}}(X,Y) \coloneqq \inf_{\Phi,\Psi} \frac{1}{2} \text{dist}(\Phi,\Psi).
    \end{equation*}
\end{definition}

The stability results we are interested in are based on the interleaving construction of TDA.

\begin{definition}[Interleaving Distance] \label{interleaving_distance}
    Let $\mathcal{P}$ be a poset, $\cat{C}$ a category and $\cat{C}^{\mathcal{P}}$ the functor category equipped with a notion of shifting/smoothing
    for any $\epsilon\geq 0$, i.e.~$(\bullet)^{\epsilon}:\cat{C}^{\mathcal{P}}\to\cat{C}^{\mathcal{P}}$, 
    is a functor that sends $F\mapsto F^{\epsilon}$ 
    and this functor is equipped with a natural transformation $\eta^{\epsilon}:\text{id}_{\cat{C}^{\mathcal{P}}}\Rightarrow (\bullet)^{\epsilon}$ that interacts in compatible ways\footnote{This is called a flow structure on the category $\cat{C}^{\mathcal{P}}$, whose full details are explored in \cite{stefanou2018dynamics,de2018theory}.}.
    We say that two objects $F,G\in \cat{C}^{\mathcal{P}}$ are $\epsilon$-\emph{interleaved} if there exist morphisms  
    $\phi:F\to G^{\epsilon}$ and $\psi:G\to F^{\epsilon}$ such that 
    $\eta^{2\epsilon}_F = \psi^{\epsilon}\circ \phi$ and $\eta^{2\epsilon}_G=\phi^{\epsilon}\circ \psi$.
    The \emph{interleaving distance } between $F$ and $G$ is then defined as
    \[
    d_\mathrm{I}(F,G)=\inf \{\epsilon \mid \mbox{$F$ and $G$ are $\epsilon$-interleaved}\}.
    \]
\end{definition}

\begin{example}[Rips Persistence]
    If we choose $\mathcal{P}=(\mathbb{R},\leq)$ and $\cat{C}=\mathbf{Vec}$ in Definition \ref{interleaving_distance} we obtain the usual interleaving distance for 1-parameter persistence modules. Rips persistent homology for a finite metric space $X$ (see Definition \ref{defn:rips}), written $PH_n(X) \in \cat{Vec}^\R$, has a natural notion of shifting by defining $PH_n(X)^{\epsilon}(r)=H_{n}(VR(X)(r+\epsilon))$.
    The fact that $VR(X)$ is a functor provides a map from $VR(X,r)\to VR(X,r+\epsilon)$, which gives the data of the natural transformation $\eta^{\epsilon}$. Thus the interleaving distance between Rips persistent homology functors is well-defined.
\end{example}

Interleavings between Rips persistent homology leads to a foundational stability result of TDA~\cite{chazal2009gromov,chazal2014persistence}: for finite metric spaces $X$ and $Y$ 
\[
d_\mathrm{I}(PH_n(X),PH_n(Y)) \leq d_\mathrm{GH}(X,Y).
\]
The type of stability we're interested in is not only governed by the Gromov-Hausdorff distance between point clouds, but scalar functions on these. 
This is expressed in the following definition, which is equivalent to a metric used in \cite{chazal2009gromov}; see also \cite{bauer2014measuring}, where a similar metric is used in the context of Reeb graphs.

\begin{definition} \label{functional_dist_dist}
Let $X$ and $Y$ be metric spaces equipped with functions $f:X\to\R$ and $g:Y\to\R$, written $X_f$ and $Y_g$, respectively.
If $\Phi:X\to Y$ and $\Psi:Y\to X$ are maps, then the functional distortion of $\Phi$ and $\Psi$ is
\begin{equation*}
\begin{aligned}
\text{FunDist}\big(\Phi,\Psi\big)\coloneqq  \text{ max} \begin{cases} 
\frac{1}{2}\text{dist}(\Phi,\Psi) \\
\lvert\lvert f-g\circ\Phi \rvert\rvert_\infty \\
\lvert\lvert g-f\circ\Psi \rvert\rvert_\infty .
\end{cases}
\end{aligned}
\end{equation*}
The functional distortion distance is then 
\begin{equation*}
d_{\mathrm{FD}}(X_f,Y_g)\coloneqq \inf_{\Phi,\Psi} \text{FunDist}\big(\Phi,\Psi\big).
\end{equation*}
\end{definition}

It is straightforward to show that $d_{\mathrm{FD}}$ is a pseudometric on the space of pairs $(X,f)$. 

\paragraph{Stability of Persistent Discrete DRGs.} We now define a persistent discrete Decorated Reeb Graph construction, which refines the notion of a persistent DMG (Definition \ref{def:persistentDMG}), and will be stable under perturbations of the functional distortion distance.

\begin{definition}[Persistent Discrete DRG]\label{def:persistence_discrete_DRG}
    Given a finite metric space endowed with a scalar-valued function $X_f$, we define  $f_r^{-1}(U)$ to be the full subcomplex of $VR(X,r)$ on all vertices $x \in f^{-1}(U)$, for each open subset $U \subset \R$.
    We then define the \emph{persistent (discretized) decorated Reeb graph} of $X_f$ to be the following 2-parameter family of categorical DRGs (Definition \ref{def:cat-DRG}):
    \begin{equation*}
        DF: (\R^2,\leq) \to \cat{Fun}(\mathcal{O}(\R), \cat{PVec}) \qquad (r,s)\mapsto \mathcal{F}(r,s)
    \end{equation*}
    where
    \begin{equation*}
        \mathcal{F}(r,s)(U):=\{A\in \pi_0(f^{-1}_r(U)) \to H_{\bullet}(VR(A,s))\}.
    \end{equation*}
\end{definition}

\begin{remark}\label{rmk:DRGandDMG}
    For a fixed $r \geq 0$, $\mathcal{F}(r,s)$ assigns to each connected component of $f^{-1}_r(U)$ the Vietoris-Rips persistent homology of that point cloud at scale $s$. Then $DF(r,\bullet)$ can be considered as a DRG $DF(r,\bullet): \mathcal{O}(\R) \to \cat{PVec}^\R$, by setting $DF(r,\bullet)(U)(s) = DF(r,s)(U)$. If we also fix a cover $\mathcal{U}$, we recover the persistent DMG of Definition \ref{def:persistentDMG} by choosing clusters associated to $U \in \mathcal{U}$ to be given by the connected components of $f_r^{-1}(U)$.

    In the above sense, the persistent discrete DRG refines the notion of a persistent DMG. This relaxation to a more continuous and categorical setting is crucial to our proof of the stability result below.
\end{remark}

\begin{theorem}[Stability of Persistent Discrete DRGs]\label{thm:persistent_DRGs}
Let $X_f$ and $Y_g$ be finite metric spaces endowed with scalar-valued functions. Let $DF$ and $DG$ be their respective persistent discrete DRGs (Definition \ref{def:persistence_discrete_DRG}). Then we have 
\[
d_\mathrm{I}(DF,DG) \leq d_\mathrm{FD}(X_f,Y_g).
\]
\end{theorem}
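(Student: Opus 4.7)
The plan is to follow the standard Reeb-graph/persistence stability template: for any $\epsilon > d_{\mathrm{FD}}(X_f,Y_g)$ extract a pair $(\Phi,\Psi)$ with $\text{FunDist}(\Phi,\Psi)<\epsilon$, lift $\Phi$ and $\Psi$ to simplicial maps between Vietoris-Rips filtrations that respect both the level-set supports and the connected-component attributions, and then assemble the induced maps on $\pi_0$ and on persistent homology into an $\epsilon$-interleaving between $DF$ and $DG$. The natural $\epsilon$-shift on $\cat{Fun}(\R^2,\cat{Fun}(\mathcal{O}(\R),\cat{PVec}))$ combines $(r,s)\mapsto(r+\epsilon,s+\epsilon)$ with post-thickening $U\mapsto U^{\epsilon}$, so the required morphism at parameters $(r,s,U)$ is a morphism $\phi(r,s)(U)\colon \mathcal{F}(r,s)(U)\to \mathcal{G}(r+\epsilon,s+\epsilon)(U^{\epsilon})$ in $\cat{PVec}$.

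With $(\Phi,\Psi)$ fixed, I would extract three consequences of the functional distortion bound. The metric bound gives $d_Y(\Phi(x),\Phi(x'))\leq d_X(x,x')+2\epsilon$, so $\Phi$ induces simplicial maps $VR(X,r)\to VR(Y,r+\epsilon)$. The bound $\|f-g\circ\Phi\|_\infty<\epsilon$ implies $\Phi(f^{-1}(U))\subseteq g^{-1}(U^{\epsilon})$. Combining these restricts $\Phi$ to a simplicial map $f_r^{-1}(U)\to g_{r+\epsilon}^{-1}(U^{\epsilon})$, inducing a set map $\pi_0(f_r^{-1}(U))\to\pi_0(g_{r+\epsilon}^{-1}(U^{\epsilon}))$ that sends each component $A$ to the unique component $B(A)$ containing $\Phi(A)$. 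For each such $A$, restriction yields simplicial maps $VR(A,s)\to VR(B(A),s+\epsilon)$, hence linear maps $H_\bullet(VR(A,s))\to H_\bullet(VR(B(A),s+\epsilon))$. Packaging the set map together with the family of induced linear maps gives the morphism $\phi(r,s)(U)$ in $\cat{PVec}$; naturality in $r$, $s$, and $U$ follows from the fact that all the constituent simplicial inclusions commute with restrictions. The symmetric construction from $\Psi$ yields $\psi\colon DG\to DF^{\epsilon}$.

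Verifying the two interleaving identities reduces to a contiguity argument. From the distortion bound applied to the pairs $(x,\Phi(x))$ and $(\Psi(\Phi(x)),\Phi(x))$ in $C(\Phi,\Psi)$, one gets $d_X(x,\Psi(\Phi(x)))<2\epsilon$; combining the two functional bounds in tandem gives $|f(x)-f(\Psi(\Phi(x)))|<2\epsilon$. Consequently, for any simplex $\sigma$ of $VR(A,s)$ inside $f_r^{-1}(U)$, the set $\sigma\cup(\Psi\circ\Phi)(\sigma)$ is a simplex in $VR(A',s+2\epsilon)$ inside $f_{r+2\epsilon}^{-1}(U^{2\epsilon})$, where $A'$ is the unique component of the thickened preimage containing $A$ (and hence also $\Psi(\Phi(A))$). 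This exhibits $\Psi\circ\Phi$ as contiguous to the canonical inclusion at scales $(r,s)\to(r+2\epsilon,s+2\epsilon)$ and cover $U\to U^{2\epsilon}$, so contiguity of simplicial maps forces equality on homology, yielding $\psi^{\epsilon}\circ\phi=\eta^{2\epsilon}_{DF}$. The other identity is symmetric. Taking the infimum over $\epsilon>d_{\mathrm{FD}}(X_f,Y_g)$ gives the theorem.

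The main obstacle is clerical rather than conceptual: every step involves three independent perturbations---the Rips scale $r$, the attribution scale $s$, and the open set $U$---so one must juggle commutativity of several diagrams in $\cat{Fun}(\R^2,\cat{Fun}(\mathcal{O}(\R),\cat{PVec}))$, simultaneously natural in the $\cat{Set}$-component (set maps on $\pi_0$) and in the $\cat{Vec}$-component (induced maps on $H_\bullet$) of $\cat{PVec}$. None of these checks is individually difficult---they reduce to functoriality of $VR$ and $\pi_0$, monotonicity of thickening ($U\subseteq V\Rightarrow U^{\epsilon}\subseteq V^{\epsilon}$), and standard contiguity of Vietoris-Rips maps---but the multi-parameter bookkeeping has to be handled with care.
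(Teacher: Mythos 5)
Your proposal is correct and follows essentially the same route as the paper's proof: extract $(\Phi,\Psi)$ from the functional distortion bound, use the metric bound to lift them to simplicial maps $f_r^{-1}(U)\to g_{r+\epsilon}^{-1}(U^{\epsilon})$ between Rips complexes restricted to level-set preimages, package the induced $\pi_0$ and homology maps into $\cat{PVec}$ morphisms, and verify the interleaving identities via contiguity of $\Psi\circ\Phi$ with the canonical inclusion at doubled parameters. The only difference is that you spell out the $\cat{PVec}$ bookkeeping slightly more explicitly than the paper's sketch does.
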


The $\epsilon$-smoothing of $DF$ is defined by
\[
DF^{\epsilon}(r,s)(U)=DF(r+\epsilon,s+\epsilon)(U^{\epsilon})
\]
where $U^{\epsilon}:=\{t\in \R \mid \exists v\in U \text{ s.t. } |t-v|<\epsilon\}$ is the $\epsilon$-thickening of the open set $U\in \mathcal{O}(\R)$. This leads to the notion of the interleaving distance $d_\mathrm{I}$ used in the theorem.

\begin{proof}[Proof Sketch]
Suppose the functional distortion distance of Definition \ref{functional_dist_dist} between $X_f$ and $Y_g$ is less than $\delta$. 
This means that for every $\epsilon>\delta$ there are maps $\Phi:X \to Y$ and $\Psi:Y\to X$ whose distortion is less than $2\epsilon$.
Also, $\lvert\lvert f-g\circ \Phi \rvert\rvert_\infty \leq \epsilon$, which implies that $\forall U \in \mathcal{O}(\R)$ we have
\[
    f^{-1}(U)\subseteq \Phi^{-1}(g^{-1}(U^{\epsilon})).
\]
This implies that if $\sigma\subseteq f^{-1}(U)$ is a subset with $d(x_i,x_j)\leq 2r$ for all pairs of points in $\sigma$, i.e. $\sigma\in VR(X,r)\cap f^{-1}_r(U)$, then $\Phi_{r,s}(\sigma)\in g^{-1}_{r+\epsilon}(U^{\epsilon}) \cap VR(Y,s+\epsilon)$.
Moreover, this containment holds when restricted to a component $A\in \pi_0(f^{-1}_r(U))$.
Symmetric reasoning using the condition $\lvert\lvert g-f\circ \Psi \rvert\rvert_\infty \leq \epsilon$ guarantees that 
$\forall U \in \mathcal{O}(\R)$
\[
    g^{-1}(U)\subseteq \Psi^{-1}(f^{-1}(U^{\epsilon}))
\]
and in particular $\Phi_{r,s}(\sigma)\in g^{-1}_{r+\epsilon}(U^{\epsilon}) \cap VR(Y,s+\epsilon)$ is carried to a simplex $\Psi_{r+\epsilon,s+\epsilon}\circ\Phi(\sigma)\in f^{-1}_{r+2\epsilon}(U^{2\epsilon}) \cap VR(X,s+2\epsilon)$ that is contiguous to $\sigma$ inside $ f^{-1}_{r+2\epsilon}(U^{2\epsilon}) \cap VR(X,s+2\epsilon)$, thus guaranteeing that the induced map on homology $VR(A,s)\to VR(A,s+\epsilon)$ for each component $A\in \pi_0(f_r^{-1}(U))$ is the same as $\Psi_{r+\epsilon,s+\epsilon}\circ\Phi_{r,s}$.
This establishes half of the interleaving condition and the other half is argued \emph{mutatis mutandi}.
\end{proof}

\paragraph{Stability of Barcode Transforms.} We end this theoretical section with another stability result, which deals more directly with DRGs. While the constructions involved are somewhat more straightforward, we discuss their limitations in practice at the end of the section.

\begin{definition}[Barcode Transform]\label{def:barcode-transform}
Let $F\colon \mathcal{O}(R)\rightarrow \mathbf{Vec}$ be the decorated Reeb graph associated to $f:X\to \R$, where each open set $U\subseteq R$ is assigned a finite-dimensional vector space. 
We define the \emph{barcode transform} of $F$ to be the map
\begin{equation*}
\begin{aligned}
BF\colon R&\rightarrow \mathbf{Vec}^{\mathbb{R}} \\
r\in R& \mapsto \Big( t\in\mathbb{R}_{\geq 0}\mapsto F\big( B_{d_f}(r,t)\big) \Big)
\end{aligned}
\end{equation*}
Since every persistence module can be identified with a barcode, we can view the barcode transform as an assignment of a barcode to each point in the Reeb graph. 
\end{definition}

Using $\epsilon$-smoothings of open sets, i.e.\ setting $\mathcal{P}=\mathcal{O}(\mathbb{R})$ and $F^\epsilon(U)\coloneqq F(U^{\epsilon})$ in Definition \ref{interleaving_distance}, we can define interleaving distances for categorical Reeb graphs and categorical decorated Reeb graphs. Moreover, the interleaving distance of categorical Reeb graphs gives rise to an interleaving distance of concrete Reeb graphs as defined in \cite{desilva}. In the following we define the functional distortion distance for barcode transforms and show that it is controlled by the interleaving distance of the Reeb graphs and their corresponding categorical decorated Reeb graphs.   

\begin{definition} \label{def:FD_distance_BT}
    Let $F\colon \mathcal{O}(R)\rightarrow\mathbf{Vec}$ and $G\colon \mathcal{O}(S)\rightarrow\mathbf{Vec}$ be concrete decorated Reeb graphs over $(R,f)$ and $(S,g)$. We define the functional distortion distance of the corresponding barcode transforms by
    \begin{equation*}
        d_\mathrm{FD}\big(BF,BG\big)\coloneqq \underset{\Phi,\Psi}{\text{inf}} \text{max}\begin{cases}
            \text{FunDist}(\Phi,\Psi) \\
            \underset{r\in R}{\text{sup}}\hspace{2pt} d_\mathrm{I}\big(BF(r),BG\circ \Phi(r)\big) \\
            \underset{s\in S}{\text{sup}}\hspace{2pt} d_\mathrm{I}\big(BF\circ\Psi(s),BG(s)\big)
        \end{cases}
    \end{equation*}
    where $\text{FunDist}$ is taken w.r.t.\ $f$ and $g$ and the infimum is over all functions $\Phi\colon R\rightarrow S$ and $\Psi\colon S\rightarrow R$.
\end{definition}

\begin{theorem} \label{thm:strong_equivalence}
    Let  
    $F\colon \mathcal{O}(R)\rightarrow \mathbf{Vec}$ and $G\colon \mathcal{O}(S)\rightarrow \mathbf{Vec}$ be concrete decorated Reeb graphs over $(R,f)$ and $(S,g)$ and $\mathcal{F},\mathcal{G}\colon \mathcal{O}(\mathbb{R})\rightarrow \mathbf{PVec}$ the corresponding categorical decorated Reeb graphs, then
    \begin{equation*}
        d_\mathrm{I}\big(R,S\big)\leq d_\mathrm{FD}\big(BF,BG\big)\leq 6 d_\mathrm{I}\big(\mathcal{F},\mathcal{G}\big) \hspace{1pt}.
    \end{equation*}
\end{theorem}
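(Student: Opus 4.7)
The strategy is to prove the two inequalities separately. The left inequality reduces to known stability results for undecorated Reeb graphs, while the right inequality is the substantive part: it requires extracting concrete maps $\Phi\colon R\to S$ and $\Psi\colon S\to R$ from the abstract interleaving data of $\mathcal{F}$ and $\mathcal{G}$, and then bounding each of the three quantities in the maximum of Definition \ref{def:FD_distance_BT}.

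For the left inequality, I observe that each of the three quantities in the maximum defining $d_\mathrm{FD}(BF,BG)$ dominates $\tfrac{1}{2}\mathrm{dist}(\Phi,\Psi)$, $\|f - g\circ\Phi\|_\infty$, and $\|g - f\circ\Psi\|_\infty$, so taking the infimum over $\Phi,\Psi$ gives $d_\mathrm{FD}(BF,BG) \geq d_\mathrm{FD}(R_f, S_g)$. Combining this with the known relationship between the interleaving distance of concrete Reeb graphs (as categorical objects via \cite{desilva}) and functional distortion, namely $d_\mathrm{I}(R,S) \leq d_\mathrm{FD}(R_f, S_g)$, completes this direction.

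For the right inequality, fix $\epsilon > d_\mathrm{I}(\mathcal{F},\mathcal{G})$ and let $\phi\colon\mathcal{F}\Rightarrow\mathcal{G}^\epsilon$, $\psi\colon\mathcal{G}\Rightarrow\mathcal{F}^\epsilon$ realize the interleaving. Applying $\mathbf{dom}$ produces an $\epsilon$-interleaving of the underlying categorical Reeb graphs. From this I would extract concrete maps $\Phi,\Psi$ as in the standard strong-equivalence argument for Reeb graphs: for each $r\in R$, choose $\Phi(r)$ to lie in the connected component of $g^{-1}((f(r)-\delta, f(r)+\delta))$ selected by $\phi$ applied to the component of $r$, taking $\delta$ small. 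A routine argument, tracking how components are permuted through the composition $\psi^\epsilon\circ\phi = \eta^{2\epsilon}_\mathcal{F}$, bounds $\tfrac{1}{2}\mathrm{dist}(\Phi,\Psi)$ by a multiple of $\epsilon$ and yields $\|f - g\circ\Phi\|_\infty, \|g - f\circ\Psi\|_\infty \leq \epsilon$.

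The bulk of the work is bounding $\sup_{r\in R} d_\mathrm{I}(BF(r), BG\circ\Phi(r))$. For fixed $r$ and $t\geq 0$, the ball $B_{d_f}(r,t)$ is the component of $r$ in $f^{-1}(I_t)$ for an interval $I_t$ containing $f(r)$ of length at most $2t$. The map $\phi_{I_t}\colon \mathcal{F}(I_t) \to \mathcal{G}(I_t^\epsilon)$ then sends the summand indexed by $r$ to a summand indexed by a component of $g^{-1}(I_t^\epsilon)$ containing $\Phi(r)$, which one verifies contains a $d_g$-ball $B_{d_g}(\Phi(r), t + C\epsilon)$ for an explicit $C$. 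Naturality of $\phi$ in $t$ assembles these into a shift $BF(r) \Rightarrow BG(\Phi(r))^{C\epsilon}$; the symmetric construction from $\psi$ combined with the interleaving triangle identities gives the required interleaving of barcode transforms. The main obstacle is the bookkeeping of constants: the conversion between $d_f$-balls of radius $t$ (intervals of length $\sim 2t$) and $\epsilon$-thickenings in $\mathcal{O}(\R)$ (which add $2\epsilon$ to total length), together with the constants arising from the FunDist bounds and from composing interleaving morphisms, must be tracked carefully so that they compose to the claimed factor of $6$.
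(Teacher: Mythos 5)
Your overall strategy matches the paper's: the left inequality is handled identically (each term of Definition \ref{def:FD_distance_BT} dominates the corresponding term of $\mathrm{FunDist}$, then invoke $d_\mathrm{I}(R,S)\leq d_\mathrm{FD}(R_f,S_g)$), and for the right inequality the paper likewise applies $\mathbf{dom}$ to the interleaving of $\mathcal{F},\mathcal{G}$, passes through the de Silva--Munch--Patel equivalence to an interleaving of concrete Reeb graphs, takes the maps $\Phi,\Psi$ from the proof of $d_\mathrm{FD}\leq 3d_\mathrm{I}$ in Bauer et al., and then tracks components of preimages of intervals to interleave $BF(r)$ with $BG(\Phi(r))$. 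So the skeleton is right. There is, however, one substantive step that your sketch asserts rather than proves, and it is exactly where the content (and the constant $6$) lives.

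You claim that ``the symmetric construction from $\psi$ combined with the interleaving triangle identities gives the required interleaving of barcode transforms.'' This does not follow formally. The triangle identity for $\mathcal{F},\mathcal{G}$ tells you that $\beta\circ\alpha$ agrees with the structure map on $\pi_0$ of preimages of intervals \emph{centered at $f(r)$ and $g(\Phi(r))$ respectively}, but the composite $\nu_t\circ\mu_t$ you need to control goes $F(B_{d_f}(r,t))\to G(B_{d_g}(\Phi(r),\cdot))\to F(B_{d_f}(?,\cdot))$, and the component you land in is the one containing $\Psi(\Phi(r))$, not $r$. Since $\Psi\circ\Phi(r)\neq r$ in general, you must separately argue that $r$ and $\Psi\circ\Phi(r)$ lie in the \emph{same} path component of $f^{-1}(B(f(r),t+\kappa+\epsilon))$, i.e.\ that $B_{d_f}(r,t+\kappa+\epsilon)=[\Psi\circ\Phi(r)]$. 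This requires the distortion estimate $d_f(r,\Psi\circ\Phi(r))\leq 6(\epsilon+\delta)$ coming from the $\mathrm{dist}(\Phi,\Psi)\leq 6(\epsilon+\delta)$ bound of the Bauer et al.\ construction, and it is satisfied only once the interleaving shift $\kappa+\epsilon$ exceeds $6(\epsilon+\delta)$. That is where the factor $6$ comes from --- not, as you suggest, from converting between ball radii and interval thickenings (those conversions only cost multiples of $\epsilon+\delta$ via $\kappa>\epsilon+\delta$). A second, related point you elide with ``which one verifies'': identifying the component selected by the abstract map $\alpha$ with the component of the concretely constructed $\Phi(r)$ is not automatic, because $\Phi$ is built through the thickening/quotient machinery ($\Phi=p_1\circ\tilde\varphi_\delta$); the paper devotes a full diagram chase through the isomorphism $h\colon S_\epsilon\mathbf{R}\Rightarrow\mathbf{R}\mathcal{U}_\epsilon$ to establish $\alpha([r])=[\Phi(r)]$. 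Without these two verifications your interleaving of $BF(r)$ and $BG(\Phi(r))$ is not actually constructed, and the claimed constant is unsupported.
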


\begin{proof}[Proof Sketch]
    We begin with the inequality on the left. As shown in \cite{bauer}, $d_\mathrm{I}\big(R,S\big)\leq d_\mathrm{FD}\big(R_f,S_g\big)$ and, since the functional distortion distance on Reeb graphs corresponds to the first part of the functional distortion distance of barcode transforms (\cref{def:FD_distance_BT}), we obviously get $d_\mathrm{FD}\big(R_f,S_g\big)\leq d_\mathrm{FD}\big(BF,BG\big)$.

    To demonstrate the inequality on the right, let $\mathbf{dom}\colon \mathbf{PVec}\rightarrow \mathbf{Set}$ be the functor that sends functors in $\mathbf{PVec}$ to its domain. We observe that $\mathbf{dom} \hspace{1pt}\mathcal{F}=\mathcal{R}$ the categorical Reeb graph of $(R,f)$. Hence, given an $\epsilon$-interleaving between $\mathcal{F}$ and $\mathcal{G}$, applying $\mathbf{dom}$ yields an $\epsilon$-interleaving between $\mathcal{R}$ and $\mathcal{S}$ and, furthermore, an $\epsilon$-interleaving between $(R,f)$ and $(S,g)$. As shown in \cite{bauer}, $d_\mathrm{FD}(R_f,S_g)\leq 3d_\mathrm{I}(R,S)$. One can now check that the functions $\Phi\colon R\rightarrow S$ and $\Psi\colon S\rightarrow R$ constructed in the proof of this inequality satisfy $ d_\mathrm{I}\big(BF(r),BG\circ \Phi(r)\big)\leq 6\epsilon$ and $\hspace{2pt} d_\mathrm{I}\big(BF\circ\Psi(s),BG(s)\big)\leq 6\epsilon$ for all $r\in R$ and for all $s\in S$.  
\end{proof}

The details of the last part of the proof are quite technical, and we provide full details in the Appendix. 

\begin{remark}\label{rmk:homotopy_type}
    We remark that this result is interesting from a theoretical perspective, but has some shortcomings in practice. In particular, the functional distortion distance used here is infinite if the ranks of $BF(r)$ and $BG(r)$ do not agree for all sufficiently large $r$. 
\end{remark}

\section{Computation}\label{sec:computation}

We now describe constructions of attributed graphs from point cloud data. In the following, we generically refer to such attributed graphs as Decorated Reeb Graphs (DRGs).

\paragraph{Creating Reeb Graphs.} Reeb graphs are most naturally defined for continuous metric spaces, so one needs to approximate a Reeb graph structure for discrete data. We provide a construction similar to the Mapper algorithm \cite{singh2007topological} for estimating Reeb graphs. We first fix a scale $r$ for the Vietoris-Rips complex $VR(X,r)$. Choosing an appropriate value of $r$ is treated as a hyperparameter tuning process; similar ideas for Reeb graph estimation go back at least to~\cite{ge2011data}. For the shape datasets considered in this paper, we used a simple heuristic which took $r = m \cdot r_0$, where $r_0$ is the smallest scale at which the VR complex is connected and $m$ is a small integer (we typically took $m=2$ or $3$). Next, we choose a resolution parameter $n$ and uniformly subdivided the image of $f$ into $n$ bins, $U_1,\ldots,U_n$ (we treat this as a partition of the range, but one could instead thicken slightly and work with an open cover, similar to the usual Mapper construction). This is used to approximate the Reeb graph $G$ of $(VR(X,r),f)$: each node $v$ of $G$ corresponds to a connected component $A_v \subset X$ of one of the sets $f^{-1}(U_i)$, and there is an edge between nodes $v$ and $w$ if $A_v \subset f^{-1}(U_i)$, $A_w \subset f^{-1}(U_{i+1})$ and $A_v$ and $A_w$ are connected by an edge in the 1-skeleton of $VR(X,r)$. An alternative approach would be to compute the exact Reeb graph for $VR(X,r)$ via algorithms of~\cite{harvey2010randomized} or~\cite{parsa2012deterministic}; these algorithms are very efficient, but we found that they did not scale well in the Vietoris-Rips setting due to blowup in the size of the simplicial complexes. 

\paragraph{Creating DRGs - Local Approach.} 

Let $G = (V,E)$ be an estimated Reeb graph for $VR(X,r)$. A simple approach to adding persistent homology decorations to $G$ is as follows. In this \emph{local approach},  degree-$n$ persistent homology is computed for the subset of $X$ corresponding to each node in $G$. The resulting data structure $(G,D)$ consisting of a finite graph $G = (V,E)$ and an attribution function $D:V \to \R \times \cat{Barcodes}$, where $\cat{Barcodes}$ is the set of (persistent homology) barcodes. The attribution takes a node $v \in V$ to $D(v) = (\bar{f}(v),B(v))$, where $\bar{f}(v) = \frac{1}{|A_v|} \sum_{x \in A_v} f(x)$ and $B(v)$ is the persistent homology barcode of $A_v$. 

This method for constructing a DRG can be seen as an approximation of a particular slice of the persistent discrete DRG structure $DF:(\R^2, \leq) \to \cat{Fun}(\mathcal{O}(\R), \cat{PVec})$ introduced in Definition  \ref{def:persistence_discrete_DRG}, as we observed in Remark \ref{rmk:DRGandDMG}.

\paragraph{Creating DRGs - Barcode Transform Approach.} The following is an alternative approach to adding persistent homology decorations to $G = (V,E)$, an estimated Reeb graph for $VR(X,r)$. For each $v \in V$, we define a filtration on $VR(X,r)$ by distance to the set $A_v$ and compute the degree-$n$ persistent homology of the resulting filtered simplicial complex. This once again results in a data structure of the form $(G,D)$ with the attribution function $D:V \to \R \times \cat{Barcodes}$ now recording average function value and persisent homology of the distance-to-$A_v$ function. This method for constructing the DRG is a simplification of the true barcode transform for the decorated Reeb graph of $VR(X,r)$ (see Definition \ref{def:barcode-transform}).

\begin{remark}
The local DRG algorithm easily scales to handle datasets with thousands of points and provides intuitive data summaries consisting of an approximation of the Reeb graph skeleton, attributed with persistence diagrams encoding local structural information (see Figures \ref{fig:intro_example} and  \ref{fig:SeveralExamples}). However, we note that this representation is an aggressive simplification of the structure described in Definition \ref{def:persistence_discrete_DRG}, so that the theoretical stability result of Theorem \ref{thm:persistent_DRGs} does not directly apply in this setting. On the other hand, the barcode transform approach to constructing DRGs gives an approximation of the true barcode transform of $VR(X,r)$, and is therefore much more closely tied to theory. This construction requires several computations of persistent homology on the full complex $VR(X,r)$ (endowed with different filtrations), so that it is not scalable to large datasets. As such, most of our computational experiments below will focus on the local DRG construction.
\end{remark}

\paragraph{Comparing DRGs.} Since the interleaving distance considered in Theorem \ref{thm:persistent_DRGs} is not applicable to our data representation and is computationally intractable, we use the \emph{Fused Gromov-Wasserstein (FGW) framework} \cite{vayer2020fused} for metric-based analysis of DRGs. Intuitively,  the FGW distance, defined below, is  a more easily approximable proxy for the functional distortion distance $d_\mathrm{FD}$.

Let $(G_1,D_1)$, $(G_2,D_2)$ be DRGs constructed as described above (via either the local or barcode transform approaches). The \emph{$\alpha$-FGW distance} is defined by
\begin{align}
&d_{\mathrm{FGW},\alpha}((G_1,D_1),(G_2,D_2))^2 \label{eqn:dfgw} \\
& \qquad \qquad =\inf_{\pi \in \mathcal{C}(V_1,V_2)} \alpha \cdot L_\mathrm{gr}(\pi) + (1-\alpha)\cdot L_\mathrm{bc}(\pi) \nonumber
\end{align}
where the set $\mathcal{C}(G_1,G_2)$ and the loss functions $L_\mathrm{gr}$ and $L_\mathrm{bc}$ are defined as follows. An element $\pi \in \mathcal{C}(G_1,G_2)$ is a matrix of size $|V_1| \times |V_2|$ satisfying $\sum_{v_1 \in V_1} \pi(v_1,v_2) = \frac{1}{|V_2|}$ for all $v_2 \in V_2$ and $\sum_{v_2 \in V_2} \pi(v_1,v_2) = \frac{1}{|V_1|}$ for all $v_1 \in V_1$---intuitively, this is the space of probabilistic couplings of the uniform measures on $V_1$ and $V_2$, respectively. The \emph{graph loss} $L_\mathrm{gr}(\pi)$ is defined by 
\begin{equation}\label{eqn:graph_loss}
\sum \left(d_1(v_1,w_1) - d_2(v_2,w_2)\right)^2 \pi(v_1,v_2)\pi(w_1,w_2),
\end{equation}
 where the sum is over all $v_i,w_i \in V_i$ and  $d_\mathrm{I}:V_i \times V_i \to \R$ is a choice of function representing graph structure---for example, we typically use the shortest path distance, where each edge $(v_i,w_i)$ in $E_i$ is weighted by $|\overline{f}_i(v_i) - \overline{f}_i(w_i)|$. Finally, the \emph{barcode loss} $L_\mathrm{bc}(\pi)$ is defined by 
 \begin{equation}\label{eqn:barcode_loss}
 \sum_{v_i \in V_i} d_b(D_1(v_1),D_2(v_2))^2 \pi(v_1,v_2),
 \end{equation}
 where $d_b$ is the standard bottleneck distance between barcodes. The intuition for the distance is as follows: $\pi \in \mathcal{C}(V_1,V_2)$ is interpreted as a probabilistic registration of the nodes of $G_1$ and $G_2$, the loss $L_{\mathrm{gr}}$ measures how well the registration preserves the graph structure, the loss $L_{\mathrm{bc}}$ measures how well the registration preserves attributions, and the hyperparameter $\alpha$ balances contributions of graph structure and attributions; the optimization problem therefore searches for a probabilistic registration which incurs the least total distortion of these structures. Fixing a methodology for assigning a distance graph function $d:V \times V \to \R$ to a DRG $(G,D)$, it follows from Theorem 1 of \cite{vayer2020fused} that $d_{\mathrm{FGW},\alpha}$ defines a pseudometric on the space of DRGs, for any choice of $\alpha \in [0,1]$. 

The idea for FGW distance originates from the Gromov-Wasserstein (GW) distances introduced by M\'{e}moli in \cite{10.2312:SPBG:SPBG07:081-090}; roughly, the GW 2-distance is obtained by setting $\alpha=1$ in the FGW formula. The GW distances can be seen as $L^p$-relaxations of the Gromov-Hausdorff (GH) distance and are used for the comparison of metric measure spaces (mm-spaces). The FGW distance was introduced in \cite{vayer2020fused} to adapt GW distance to the setting of mm-spaces whose points come with feature attributions. In particular, our use of $d_{\mathrm{FGW},\alpha}$ can be seen as a relaxation of the functional distortion distance of Definition \ref{def:FD_distance_BT}. Such a relaxation makes the discrete optimization problem arising in GH distances amenable to approximation by gradient descent, as the search space of the optimization becomes a compact, convex polytope $\mathcal{C}(V_1,V_2) \subset \R^{|V_1| \times |V_2|}$. For finite mm-spaces of size $O(n)$, a gradient descent iteration for approximation of GW distance has $O(n^3\log(n))$ cost~\cite{peyre2016gromov}. For the FGW distance in our setting, computation is complicated by the need to evaluate $|V_1| \times |V_2|$ bottleneck distances, each of which carries a cubic cost in the number of points in the diagrams. To ease this computational burden, we frequently replace the bottleneck distance computations with Euclidean distance between persistence image vectorizations of the diagrams~\cite{adams2017persistence}. We remark that (Fused) Gromov-Wasserstein distances have been used successfully in several recent works to compare other topological invariants---see, e.g.,~\cite{li2021sketching,curry2022decorated,li2023flexible}.

\paragraph{Related Constructions.} Our constructions have a similar flavor to other enriched topological invariants in the literature. Most similar are the Decorated Merge Trees (DMTs) introduced by the first four authors in~\cite{curry2022decorated}. A DMT is a certain rooted tree attributed with homological information which can be extracted from a dataset endowed with a filter function---roughly, a DMT captures the topology of sublevel sets of the filtration, while the constructions in the present paper are concerned with level set topology. Our constructions also share features of Persistent Homology Transforms (PHTs)~\cite{turner2014persistent}, which associate a collection of persistence diagrams to a shape in Euclidean space by using projections to various 1-dimensional subspaces as filter functions. Our constructions also yield families of persistence diagrams, but the families are parameterized by nodes of Mapper graphs, rather than by collections of lines. 

\section{Experiments}\label{sec:examples}

We now illustrate our computational pipeline with several experiments. Our source code is available at our GitHub repository\footnote{https://github.com/trneedham/Topologically-Attributed-Graphs}.

\paragraph{Example DRGs.} In Figure \ref{fig:SeveralExamples}, we provide a few examples of DRGs constructed via the methods described in Section \ref{sec:computation}. 

The first row of Figure \ref{fig:SeveralExamples} shows a shape from ModelNet10~\cite{wu20153d}, a curated collection of CAD models of household objects. The CAD model has been converted to point cloud data by sampling. We show the DRG computed via the local approach, with respect to height along the $z$-axis; nodes of the DRG are colored by total persistence of their diagram attributes. We show the diagrams associated to two of the nodes, as well as the associated subsets of the original dataset. 

The second row of the figure shows a humanoid figure from the SHREC14 dataset~\cite{Pickup2014}; once again, the pointcloud data is obtained by sampling a triangulated surface. For this example, the filtration function is the $p$-eccentricity (with $p=100$) (see~\cite{memoli2011gromov}, Definition 5.3) of the shortest path distance on the 1-skeleton of the underlying Vietoris-Rips complex. We show the associated DRG (via the local approach), some persistence diagram attributes, and the persistence image vectorizations of these diagrams. 

Finally, the third row of Figure \ref{fig:SeveralExamples} shows the synthetic dataset from Figure \ref{fig:intro_example}, once again endowed with the height function. In this case, the DRG is computed via the Barcode Transform approach. Observe that the associated persistence diagrams capture the global topology of the shape---note that the death time of each point is $\infty$ and that one point in each diagram is of multiplicity 2. The difference between the diagrams is the birth times of the features; the DRG nodes are colored by average birth time in their diagrams.

\begin{figure}
    \centering
    \includegraphics[width = 0.48\textwidth]{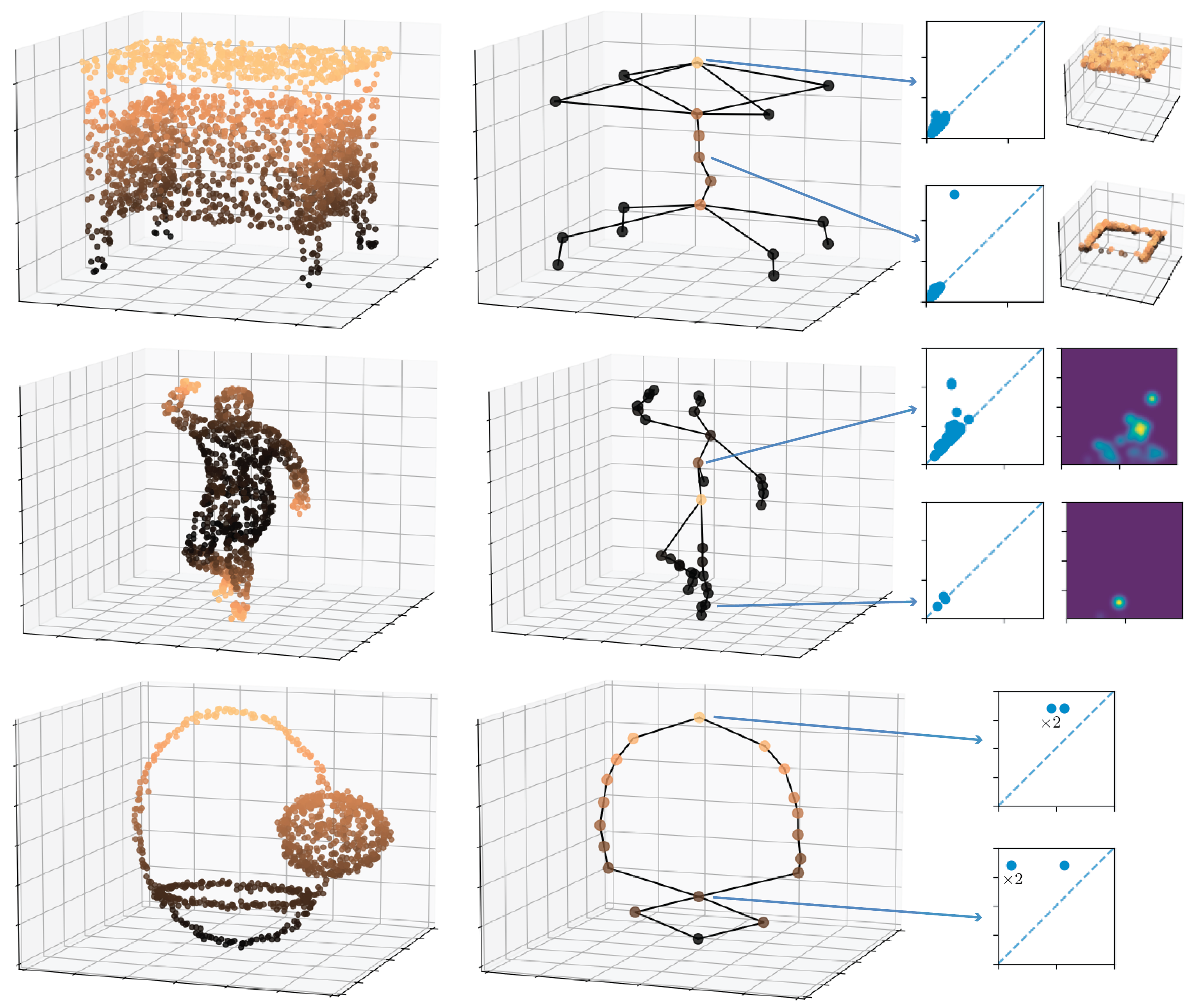}
    \caption{Examples of DRGs created with our methods. See Section \ref{sec:examples} for a detailed description.}
    \label{fig:SeveralExamples}
\end{figure}

\paragraph{Synthetic Shape Comparison.} To explore the behavior of the Fused Gromov-Wasserstein (FGW) distance $d_{\mathrm{FGW},\alpha}$ defined in \eqref{eqn:dfgw}, we consider a synthetic point cloud  dataset consisting of four classes: torus, solid torus, cylinder and solid cylinder. Each torus shape consists of 400 points sampled from a toroidal surface with minor radius 1 and major radius 6 and solid tori are generated similarly, but we take 1600 samples to get a comparable density. Cylinders consist of 400 points sampled from the surface of a cylinder of radius 1 and length $2\cdot \pi \cdot (6+1)$ so that the surface area is comparable to the torus and solid cylinders are generated similarly with 1600 sample points. Each shape in the dataset has its point coordinates perturbed independently at random and the resulting point cloud is then randomly rigidly rotated. The full dataset consists of 20 samples of each shape class. 

Each shape is converted to a DRG using the local approach, with filter functions given by 1st PCA coordinates. Node attributes are converted to persistence images, for computational efficiency. For each $\alpha \in \{0.0,0.25,0.5,1.0\}$, we construct the shape-to-shape distance matrix with respect to $d_{\mathrm{FGW},\alpha}$, where the $d_\mathrm{I}$'s in \eqref{eqn:graph_loss} are shortest path distances in the Reeb graph and $d_b$ in \eqref{eqn:barcode_loss} is replaced by Euclidean distance between persistence images. 

Multidimensional Scaling plots for the distance matrices are shown in \ref{fig:ClusteringMDS}. When $\alpha = 0$, the distance only sees the persistence image attributes, and the torus and cylinder classes are indistinguishable (the topology of each connected component of each of their level sets are very similar). For $\alpha \in \{0.25,0.5\}$, both the global structure of the Reeb graph and the local persistence image structures are considered, and all classes are able to be distinguished. Finally, when $\alpha = 1$, only the Reeb graph structures are considered in the metric; here, the torus and solid torus classes are unable to be distinguished, and the same is true of the cylinder and solid cylinder classes. This experiment suggests that there is a fairly robust range of $\alpha$-values where the metric meaningfully takes into account both Reeb graph and local persistence structures for distinguishing shapes. 

We remark that we ran the same experiment using DRGs constructed via the Barcode Transform method; here the results were much less interpretable. This is due to the fact that the distance between Barcode Transform DRGs is strongly controlled by the homotopy types of the shapes (cf. Remark \ref{rmk:homotopy_type}); it was difficult to tune parameters so that the underlying Vietoris-Rips complexes used in the construction consistently had the correct homotopy type. 

\begin{figure}
    \centering
    \includegraphics[width = 0.48\textwidth]{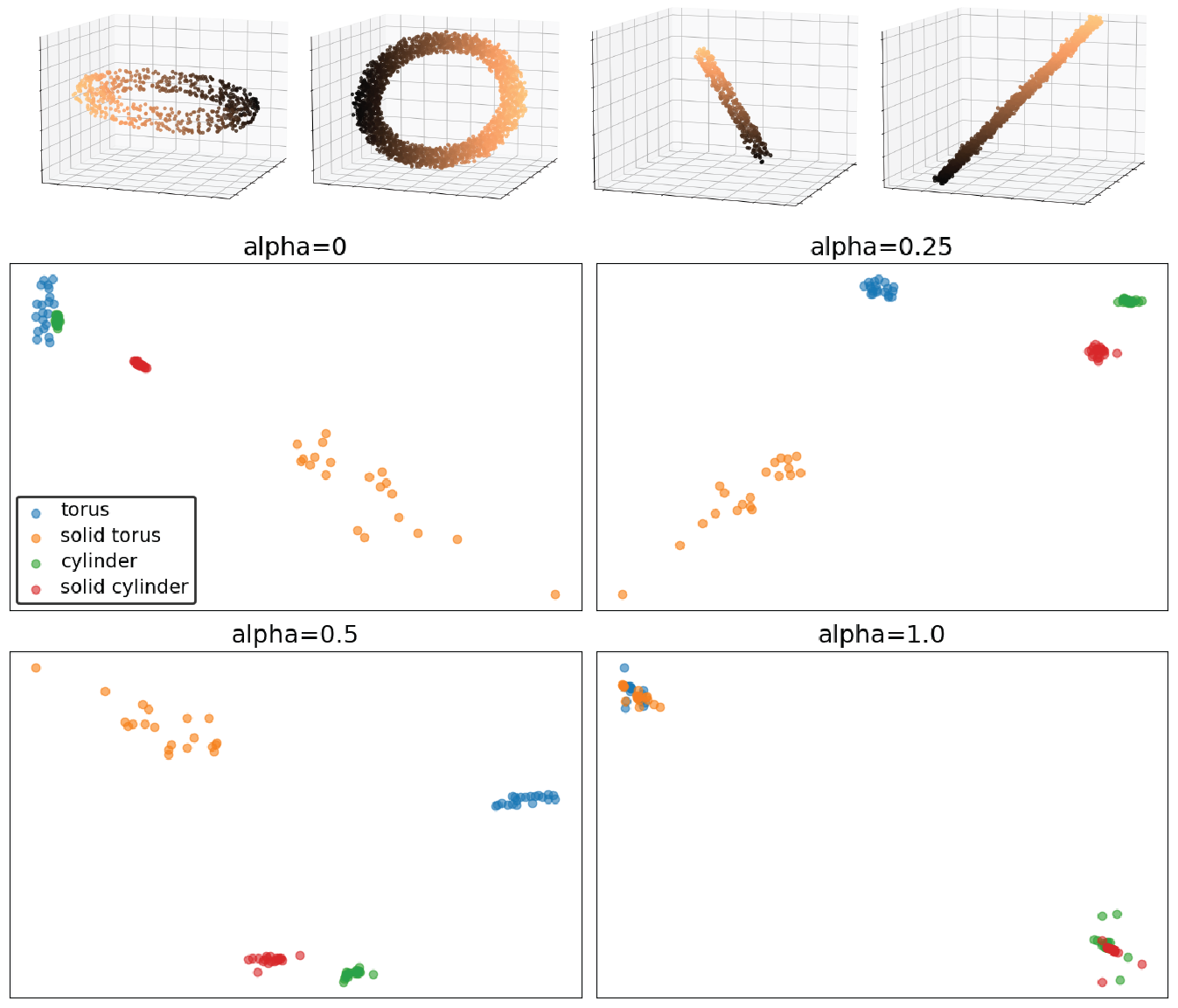}
    \caption{Synthetic shape dataset results. The top row shows a sample from each of the shape classes in the experiment; points are colored by their filter function values (first PCA coordinate). The remaining figures show Multidimensional Scaling embeddings of the dataset, coming from pairwise distance matrices with respect to $d_{\mathrm{FGW},\alpha}$ for various $\alpha$-values.}
    \label{fig:ClusteringMDS}
\end{figure}

\paragraph{Graph Neural Network-Based Classification.}

To more thoroughly test the capability of DRGs to distinguish shapes, we use DRGs as inputs to a Graph Neural Network (GNN) classifier. Our data comes from the ModelNet10 dataset~\cite{wu20153d} of CAD models of household objects. The data consists of 10 classes of objects, pre-partitioned into a training set (3991 objects) and a testing set (908 objects). We sample 1024 points form each object to form a dataset of point clouds. From each point cloud, we extract two DRGs: one using $z$-coordinates as the filtration function and the other using $x$-coordinates (see the first row of Figure \ref{fig:SeveralExamples}). The nodes of the DRGs are attributed with vectorizations of the persistence diagrams---in this case, we used summary statistics of the persistence diagrams, following \cite{ali2022survey}, Definition 2.1. We also attributed each node with the average Euclidean position of its associated pointcloud. These vector-attributed graphs were used as input to a simple GNN, consisting of four convolutional layers of width 256, implemented in PyTorch. All neural networks in this experiment were trained on a single CPU.

\begin{table}[htbp]
  \centering
  \caption{ModelNet10 Classification Results.}
  \label{tab:modelnet10Results}
  \begin{tabular}{c|ccccc}
    \toprule
    \small{PointNet} & \small{Reeb} & \small{Dgms} & \small{$\mathrm{DRG}_z$} & \small{$\mathrm{DRG}_x$} & \small{$\mathrm{DRG}_{\mathrm{xz}}$} \\
    \midrule
    89.43 & 77.64 & 63.55 & 84.69 & 85.24 & 87.11 \\
    \bottomrule
  \end{tabular}
\end{table}

Results of the classification experiment are reported in Table \ref{tab:modelnet10Results}. Besides results for the DRGs with filter function given by $z$-coordinate ($\mathrm{DRG}_z$) and $x$-coordinate ($\mathrm{DRG}_x$), we also report the combined prediction from the two models ($\mathrm{DRG}_{xz}$). The combined prediction was made by averaging the predictions of the $\mathrm{DRG}_z$ and $\mathrm{DRG}_x$ models---we also trained a GNN using a disjoint pair of DRGs (with $x$ and $z$-coordinate filtrations) to represent each shape and got similar classification scores. To test the contributions of the diagram attributes, we also trained a network on graphs where nodes were only attributed with average Euclidean coordinates ($\mathrm{Reeb}$). Likewise, we tested the contribution of the graph structure by converting each DRG into a complete graph on its nodes, each of which is attributed with persistence statistics and Euclidean coordinates ($\mathrm{Dgms}$). We see that the combination of graph structure and topological attributes provide a large boost in classification accuracy, with the best accuracy obtained by the combination of $x$ and $z$-filtrations.

To test against a baseline, we use the popular PointNet architecture for point cloud classification~\cite{qi2017pointnet}. The PointNet classification accuracy is essentially state-of-the-art for ModelNet10 classification (when using only point clouds as input, without additional structure from the CAD models), and we see that it has a slight edge over the DRG classification score. However, we note that the PointNet model\footnote{A PyTorch implementation, following \url{https://github.com/nikitakaraevv/pointnet}.} contains 3,463,763 parameters, compared to our 209,162 parameter GNN. Moreover, achieving this level of accuracy took $\sim$12 hours of training time for PointNet, while our GNN model contained an order of  trained in $\sim$10 minutes; we preprocessed the data to extract Reeb graphs, which took $\sim$1.5 hours, bringing the total time for processing and training to around $\sim$3 hours (processing using both $x$ and $z$ filtrations). This suggests that the DRG representations have a rich structure with easily learnable features.

\begin{table}[htbp]
  \centering
  \caption{ModelNet10 Subset Classification Results.}
  \label{tab:modelnet10SubsetResults}
  \begin{tabular}{c|ccc}
    \toprule
    \small{PointNet} & \small{$\mathrm{DRG}_z$} & \small{$\mathrm{DRG}_x$} & \small{$\mathrm{DRG}_{\mathrm{xz}}$} \\
    \midrule
    83.26 & 78.52 & 70.70 & 82.93 \\
    \bottomrule
  \end{tabular}
\end{table}

We tested the representational richness of DRGs further by retraining the DRG and PointNet models on only 10\% of the ModelNet10 training data, then testing classification accuracy on the full training set. In this sparse training data setting, we see that all models still perform reasonably well, but point out that the gap between PointNet and $\mathrm{DRG}_{xz}$ has essentially vanished, even though the latter is less complex by an order of magnitude.

\section{Discussion} 

In this paper, we introduced formalism for topologically attributed graphs and provided theoretical results on their stability. We also demonstrated the potential applicability of these ideas through proof-of-concept experiments. Future work will involve building a closer connection between the theory of topologically attributed Reeb graphs and their computational execution. Notably, our computational pipeline does not incorporate the more sheaf-theoretic or categorical features of decorated Reeb graphs, and integration of these aspects is an important goal. We also plan to continue to develop the computational pipeline toward more robust applications. One interesting direction will be to develop the pipeline to handle more general filtration functions, or to incorporate discovery of effective filter functions into a machine learning framework. We also intend to extend this framework to handle more general attributed simplicial complexes, to which newly developed tools of Topological Deep Learning~\cite{hajij2023topological} will apply.

\bibliography{icml_submission}
\bibliographystyle{icml2023}

\clearpage

\newpage

\appendix

\section{Proof of Theorem \ref{thm:strong_equivalence}}

\textbf{Left inequality} \\
As shown in \cite{bauer}, $d_\mathrm{I}\big(R,S\big)\leq d_\mathrm{FD}\big(R_f,S_g\big)$ and, since the functional distortion distance on Reeb graphs corresponds to the first part of the functional distortion distance of barcode transforms (\cref{def:FD_distance_BT}), we obviously get $d_\mathrm{FD}\big(R_f,S_g\big)\leq d_\mathrm{FD}\big(BF,BG\big)$. \\

\textbf{Right inequality} \\
Let $S_\epsilon\colon \cat{Fun}(\mathcal{O}(\R), \cat{PVec})\rightarrow \cat{Fun}(\mathcal{O}(\R), \cat{PVec})$ be the smoothing functor, defined by $S_\epsilon\mathcal{F}(U)\coloneqq \mathcal{F}(U^\epsilon)$ and $\iota\colon\mathcal{F}\rightarrow S_\epsilon \mathcal{F}$ be defined by $\iota(U)\coloneqq\mathcal{F}(U\subseteq U^\epsilon)$. In the following we denote an object of $\mathbf{PVec}$ by a tuple $(I,D)$ representing a set $I$ and a functor $D\colon I\rightarrow \mathbf{Vec}$. Suppose $\mathcal{F}$ and $\mathcal{G}$ are $\epsilon$-interleaved, i.e.\ we have the following commutative diagram:
\begin{equation} \label{cat_interleaving}
\begin{tikzcd}[column sep=huge,row sep=huge]
\mathcal{F} \arrow[r,"\iota"] \arrow[dr,swap,"(\alpha\text{,}\eta)"{xshift=-12pt,yshift=10pt}] & S_\epsilon\mathcal{F} \arrow[r,"\iota_{\epsilon}"] \arrow[dr,swap,"(\alpha_\epsilon\text{,}\eta_\epsilon)"{xshift=-12pt,yshift=10pt}] & S_{2\epsilon}\mathcal{F} \\
\mathcal{G} \arrow[r,"\iota"] \arrow[ur,crossing over,swap,"(\beta\text{,}\rho)"{xshift=12pt,yshift=10pt}] & S_\epsilon\mathcal{G} \arrow[ur,swap,crossing over,"(\beta_\epsilon\text{,}\rho_\epsilon)"{xshift=12pt,yshift=10pt}] \arrow[r,"\iota_{\epsilon}"] & S_{2\epsilon}\mathcal{G}
\end{tikzcd}
\end{equation}
where $\alpha$ denotes the morphisms between the parameterizing sets and $\eta$ denotes the morphisms between the parameterized vector spaces.
Let $\mathbf{dom}\colon \mathbf{PVec}\rightarrow \mathbf{Set}$ be a forgetful functor defined on an object $(I,D)\in\mathbf{PVec}$ by $\mathbf{dom}\big((I,D)\big)\coloneqq I$ and on a morphism $(\alpha,\eta)\colon (I,D)\rightarrow (I',D')$ by $\mathbf{dom}\big((\alpha,\eta)\big)\coloneqq \alpha$. If we postcompose $\mathcal{F}$ with $\mathbf{dom}$ we obtain $\mathbf{dom}\circ\mathcal{F}(U)=\mathbf{dom}(\mathcal{F}(U))=\pi_0(f^{- 1}(U))$ and $\mathbf{dom}\circ\mathcal{F}(U\subseteq V)=\pi_0(f^{-1}(U)\subseteq f^{-1}(V))$. Hence, we get $\mathbf{dom}\circ \mathcal{F}=\mathcal{R}$ the categorical Reeb graph corresponding to $(R,f)$. Denote by  $\mathbf{R}\colon\mathbf{concrete Reeb graphs}\rightarrow\mathbf{categorical Reeb graphs}$ the functor that sends a concrete Reeb graph $(R,f)$ to the corresponding categorical Reeb graph $\mathcal{R}$ (see \cite{desilva}). We now apply $\mathbf{dom}$ on \cref{cat_interleaving} and obtain the following commutative diagram of $\mathbf{Set}$-valued functors:
\begin{equation} \label{set_interleaving_1}
\begin{tikzcd}[column sep=huge,row sep=huge]
\mathbf{R}(R,f) \arrow[r,"\iota"] \arrow[dr,swap,"\alpha"{xshift=-12pt,yshift=10pt}] &[-20pt] S_\epsilon\mathbf{R}(R,f) \arrow[r,"\iota_{\epsilon}"] \arrow[dr,swap,"\alpha_\epsilon"{xshift=-12pt,yshift=10pt}] &[-20pt] S_{2\epsilon}\mathbf{R}(R,f) \\
\mathbf{R}(S,g) \arrow[r,"\iota"] \arrow[ur,crossing over,swap,"\beta"{xshift=12pt,yshift=10pt}] & S_\epsilon\mathbf{R}(S,g) \arrow[ur,swap,crossing over,"\beta_\epsilon"{xshift=12pt,yshift=10pt}] \arrow[r,"\iota_{\epsilon}"] & S_{2\epsilon}\mathbf{R}(S,g)
\end{tikzcd}
\end{equation}
By Proposition 4.29 in \cite{desilva}, the smoothing of open sets $S_\epsilon$ is equivalent to the smoothing of the underlying geometric Reeb graphs. Let $T_\epsilon(R,f)$ be the $\epsilon$-thickening of $(R,f)$ defined by $T_\epsilon R\coloneqq R\times [-\epsilon,\epsilon]$ and $\mathcal{U}_\epsilon(R,f)$ be the Reeb graph of $T_\epsilon(R,f)$ (the $\epsilon$-smoothing of $(R,f)$). These spaces can be summarized by the following commutative diagram:
\begin{equation}
\begin{tikzcd}[column sep=large,row sep=large]
R \arrow[dr,swap,"f"] & \arrow[l,swap,"p_1"] T_\epsilon R \arrow[d,"\hat{f}_\epsilon"] \arrow[r,"q"] & \mathcal{U}_\epsilon R \arrow[dl,"f_\epsilon"] \\
& \mathbb{R}
\end{tikzcd}
\end{equation} 
where $p_1$ is the projection to the first factor and $q$ is the quotient map to the Reeb space. The map $p_1$ induces a natural isomorphism $\mathbf{R}T_\epsilon\implies S_\epsilon\mathbf{R}$ such that
\begin{equation*}
\begin{aligned}
&\Big(\mathbf{R}T_\epsilon(R,f)(U)\xrightarrow{} S_\epsilon \mathbf{R}(R,f)(U)\Big)  \\ =&\Big(\pi_0(\hat{f}_\epsilon^{-1}(U))\xrightarrow{\pi_0(p_1)}\pi_0(f^{-1}(U^\epsilon))\Big)
\end{aligned}
\end{equation*}; \cite{desilva} Theorem
4.2. Moreover, the map $q$ induces a natural isomorphism $\mathbf{R}T_\epsilon\implies \mathbf{R}\mathcal{U}_\epsilon$ such that 
\begin{equation*}
\begin{aligned}
&\Big(\mathbf{R}T_\epsilon(R,f)(U)\rightarrow \mathbf{R}\mathcal{U}_\epsilon(R,f)(U)\Big) \\ =&\Big(\pi_0(\hat{f}_\epsilon^{-1}(U))\xrightarrow{\pi_0(q)}\pi_0(f_\epsilon^{-1}(U))\Big)
\end{aligned}
\end{equation*}; \cite{desilva} Theorem 3.15. Let $h$ denote the composition of the following natural isomorphisms: 
\begin{equation} \label{hiso}
\begin{tikzcd}
h\colon S_\epsilon\mathbf{R} \arrow[r,Rightarrow]  &[-40pt] \mathbf{R}T_\epsilon \arrow[r,Rightarrow] &[-40pt] \mathbf{R}\mathcal{U}_\epsilon \\
h(U)\colon\pi_0(f^{-1}(U^\epsilon)) \arrow[dr,swap,"\pi_0(p_1)^{-1}"] & &  \pi_0(f_\epsilon^{-1}(U)) \\ & \pi_0(\hat{f}_\epsilon^{-1}(U)) \arrow[ur,swap,"\pi_0(q)"] 
\end{tikzcd}
\end{equation}
Applying $h$ to \cref{set_interleaving_1} yields
\begin{equation} \label{set_interleaving_2}
\begin{tikzcd}[column sep=huge,row sep=huge]
\mathbf{R}(R,f) \arrow[r,"h(\iota)"] \arrow[dr,swap,"h(\alpha)"{xshift=-12pt,yshift=10pt}] &[-20pt] \mathbf{R}\mathcal{U}_\epsilon(R,f) \arrow[r,"h(\iota_{\epsilon})"] \arrow[dr,swap,"h(\alpha_\epsilon)"{xshift=-12pt,yshift=10pt}] &[-20pt] \mathbf{R}\mathcal{U}_{2\epsilon}(R,f) \\
\mathbf{R}(S,g) \arrow[r,"h(\iota)"] \arrow[ur,crossing over,swap,"h(\beta)"{xshift=12pt,yshift=10pt}] & \mathbf{R}\mathcal{U}_\epsilon(S,g) \arrow[ur,swap,crossing over,"h(\beta_\epsilon)"{xshift=12pt,yshift=10pt}] \arrow[r,"h(\iota_{\epsilon})"] & \mathbf{R}\mathcal{U}_{2\epsilon}(S,g)
\end{tikzcd}
\end{equation}
By Theorem 3.20 in \cite{desilva}, the functor $\mathbf{R}$ is one part of an equivalence between the categories of concrete Reeb graphs and categorical Reeb graphs. If we apply the inverse functor $\mathbf{R}^{-1}$ (the display locale functor) to \cref{set_interleaving_2} we obtain the following $\epsilon$-interleaving of Reeb graphs:
\begin{equation} \label{reeb_interleaving}
\begin{tikzcd}[column sep=huge,row sep=huge]
(R,f) \arrow[r,"\iota"] \arrow[dr,swap,"\varphi"{xshift=-12pt,yshift=10pt}] &[-20pt] \mathcal{U}_\epsilon(R,f) \arrow[r,"\iota_{\epsilon}"] \arrow[dr,swap,"\varphi_\epsilon"{xshift=-12pt,yshift=10pt}] &[-20pt] \mathcal{U}_{2\epsilon}(R,f) \\
(S,g) \arrow[r,"\iota"] \arrow[ur,crossing over,swap,"\psi"{xshift=12pt,yshift=10pt}] & \mathcal{U}_\epsilon(S,g) \arrow[ur,swap,crossing over,"\psi_\epsilon"{xshift=12pt,yshift=10pt}] \arrow[r,"\iota_{\epsilon}"] & \mathcal{U}_{2\epsilon}(S,g)
\end{tikzcd}
\end{equation}
Note that by the proof of Theorem 3.20 in \cite{desilva} and the following discussion the functors $\mathbf{R}$ and $\mathbf{R}^{-1}$ are actually inverse to each other, i.e.\ $\mathbf{R}\circ\mathbf{R}^{-1}=\text{id}$ and $\mathbf{R}^{-1}\circ \mathbf{R}=\text{id}$. In particular, we have that $\mathbf{R}(\varphi)=\mathbf{R}\circ\mathbf{R}^{-1}\big(h(\alpha)\big)=h(\alpha)$, i.e.\ , for all $U$, we obtain the following commutative diagram: 
\begin{equation} \label{alphaphi}
\begin{tikzcd}[column sep=huge,row sep=huge]
\pi_0(f^{-1}(U)) \arrow[r,"\pi_0(\varphi)"] \arrow[d,"="] & \pi_0(g_\epsilon^{-1}(U)) \arrow[d,"="] \\
\pi_0(f^{-1}(U)) \arrow[r,"h(\alpha)"] \arrow[d,"="] & \pi_0(g_\epsilon^{-1}(U))  \arrow[d,"h(U)^{-1}"]  \\
\pi_0(f^{-1}(U)) \arrow[r,"\alpha"]  & \pi_0(g^{-1}(U^\epsilon))
\end{tikzcd}
\end{equation}
using the inverse of the isomorphism $h(U)$ in \cref{hiso}. By the proof of Lemma 15 in \cite{bauer}, there exist $\Phi\colon R\rightarrow S$ and $\Psi\colon S\rightarrow R$ sucht that 
\begin{equation} \label{func_dist}
\begin{aligned}
\underset{ \begin{subarray}{c} (r,r'),(s,s') \\ \in C(\Phi,\Psi) \end{subarray} }{\text{sup}}\frac{1}{2}|d_f(r,r')-d_g(s,s')| &\leq 3(\epsilon+\delta) \\
\lvert\lvert f-g\circ\Phi \rvert\rvert_\infty &\leq \epsilon+\delta \\
\lvert\lvert g-f\circ\Psi \rvert\rvert_\infty &\leq \epsilon+\delta
\end{aligned}
\end{equation}
for all sufficiently small $\delta>0$. For $r\in R$, we now show that $BF(r)$ is close to $BG\circ\Phi(r)$ in the interleaving distance. 

Let $\kappa>0$, $t\in \mathbb{R}_{\geq 0}$ and $B(f(r),t)\subseteq \mathbb{R}$ be an open ball of radius $t$ around $f(r)$. Since $|f(r)-g\circ\Phi(r)|\leq \epsilon+\delta$, if $\kappa>\epsilon+\delta$, we get:
\begin{equation} \label{inclusions1}
\begin{aligned}
B\big(f(r),t\big)&\subseteq B\big(g\circ \Phi(r),t+\kappa\big) \\ &\subseteq B\big(g\circ \Phi(r),t+\kappa+2\epsilon\big) \\ &\subseteq B\big(f(r),t+2(\kappa+\epsilon)\big) \hspace{1pt}.
\end{aligned}
\end{equation}
Therefore, by functoriality of $\mathcal{F}$ and the $\epsilon$-interleaving between $\mathcal{F}$ and $\mathcal{G}$ in \cref{cat_interleaving} we obtain:\small
\begin{equation} \label{interleaving2step}
\begin{tikzcd}[column sep=tiny]
\mathcal{F}\Big(B\big(f(r),t\big)\Big) \arrow[rr,"\mathcal{F}(\iota)"] \arrow[d,swap,"\mathcal{F}(\iota)"] &[-50pt] &[-50pt] \mathcal{F}\Big(B\big(f(r),t+2(\kappa+\epsilon)\big)\Big) \\
\mathcal{F}\Big(B\big(g\circ\Phi(r),t+\kappa\big)\Big)\arrow[rr,"\mathcal{F}(\iota)"] \arrow[dr,swap,"(\alpha_{t+\kappa}\text{,}\eta_{t+\kappa})"{yshift=2pt}] & & \Big(B\big(g\circ\Phi(r),t+\kappa+2\epsilon\big)\Big) \arrow[u,swap,"\mathcal{F}(\iota)"] \\[5pt]
&  \mathcal{G}\Big(B\big(g\circ\Phi(r),t+\kappa+\epsilon\big)\Big) \arrow[ur,swap,"(\beta_{t+\kappa+\epsilon}\text{,}\rho_{t+\kappa+\epsilon})"{yshift=2pt}]
\end{tikzcd}
\end{equation}
\normalsize
If we apply $\mathbf{dom}$ to \cref{interleaving2step} we obtain:
\small
\begin{equation} \label{interleaiving_comp}
\begin{tikzcd}[column sep=tiny]
\pi_0\Big(f^{-1}\big(B(f(r),t)\big)\Big) \arrow[drr,"\pi_0(\iota)"] \arrow[dd,swap,"\pi_0(\iota)"] \\ &[-30pt] &[-30pt] \pi_0\Big(f^{-1}\big(B(f(r),t+2(\kappa+\epsilon))\big)\Big) \\
\pi_0\Big(f^{-1}\big(B(g\circ\Phi(r),t+\kappa)\big)\Big) \arrow[drr,"\pi_0(\iota)"] \arrow[dd,swap,"\alpha_{t+\kappa}"] \\ & & \pi_0\Big(f^{-1}\big(B(g\circ\Phi(r),t+\kappa+2\epsilon)\big)\Big) \arrow[uu,swap,"\pi_0(\iota)"] \\
\pi_0\Big(g^{-1}\big(B(g\circ\Phi(r),t+\kappa+\epsilon)\big)\Big) \arrow[urr,swap,"\beta_{t+\kappa+\epsilon}"]
\end{tikzcd}
\end{equation}
\normalsize
Let $B_{d_f}(r,t)$ be the open ball of radius $t$ around $r$ in $R$. Since, $B_{d_f}(r,t)\subseteq f^{-1}(B(f(r),t))$ is  by definition path-connected, $B_{d_f}(r,t)\in \pi_0\Big(f^{-1}\big(B(f(r),t)\big)\Big)$ and, since $r\in B_{d_f}(r,t)$, we have $B_{d_f}(r,t)=[r]$ the path-component of $r$ in $f^{-1}(B(f(r),t))$. By the same argument,
$B_{d_g}(\Phi(r),t+\kappa+\epsilon)=[\Phi(r)]\in \pi_0\Big(g^{-1}\big(B(g\circ\Phi(r),t+\kappa+\epsilon)\big)\Big)$.  Moreover, $\pi_0(\iota)([r])=[\iota(r)]=[r]\in \pi_0\Big(f^{-1}\big(B(g\circ\Phi(r),t+\kappa)\big)\Big) $. By using \cref{alphaphi} for $U=B(g\circ\Phi(r),t+\kappa)$ we obtain:
\begin{equation}
\begin{tikzcd}
\pi_0\big(f^{-1}(B(g\circ\Phi(r),t+\kappa))\big) \arrow[dr,"\pi_0(\varphi)"] \arrow[dd,swap,"\alpha"] \\[5pt] &[-60pt] \pi_0\big(g_{\epsilon}^{-1}(B(g\circ\Phi(r),t+\kappa))\big) \arrow[dl,"h^{-1}"] \\[5pt] 
\pi_0\big(g^{-1}(B(g\circ\Phi(r),t+\kappa+\epsilon))\big)
\end{tikzcd}  
\end{equation}
\normalsize
By \cref{hiso}, $h^{-1}\coloneqq \pi_0(p_1)\circ\pi_0(q)^{-1}$ and, by \cite{bauer} Section 3.2, $\Phi\coloneqq p_1\circ \tilde{\varphi}_\delta$.
Since $\tilde{\varphi}_\delta(r)\in \overline{\varphi}(B_{d_f}(r,\delta))=q^{-1}\big(\varphi(B_{d_f}(r,\delta))\big)$, $\varphi(B_{d_f}(r,\delta))$ is path-connected and $\varphi(r)\in \varphi(B_{d_f}(r,\delta))$, we have that $[\varphi(r)]=[q(\tilde{\varphi}_\delta(r))]=\pi_0(q)([\tilde{\varphi}_\delta(r)])$. Hence, $\pi_0(q)^{-1}([\varphi(r)])=[\tilde{\varphi}_\delta(r)]$. By definition of $\Phi$, we have $[\Phi(r)]=[p_1\circ\tilde{\varphi}_\delta(r)]=\pi_0(p_1)([\tilde{\varphi}_\delta(r)])$. Therefore, $h^{-1}\circ \pi_0(\varphi)([r])=h^{-1}([\varphi(r)])=\pi_0(p_1)\circ\pi_0(q)^{-1}([\varphi(r)])=\pi_0(p_1)([\tilde{\varphi}_\delta(r)])=[\Phi(r)]=\alpha_{t+\kappa}([r])$. By commutativity of \cref{interleaiving_comp}, $\beta_{t+\kappa+\epsilon}\circ \alpha_{t+\kappa}([r])=\beta_{t+\kappa+\epsilon}([\Phi(r)])=\pi_0(\iota)([r])=[r]$. As a consequence, $\alpha_{t+\kappa}\circ \pi_0(\iota)(B_{d_f}(r,t))=B_{d_g}(\Phi(r),t+\kappa+\epsilon)$ and $\pi_0(\iota)\circ \beta_{t+\kappa+\epsilon}(B_{d_g}(\Phi(r),t+\kappa+\epsilon))=B_{d_f}(r,t+2(\kappa+\epsilon))$. Thus, the interleaving in \cref{interleaving2step} yields the following commutative diagram in $\mathbf{Vec}$:
\begin{equation} \label{barcode_interleaving1}
\begin{tikzcd}[column sep=small]
F\big(B_{d_f}(r,t)\big) \arrow[rr,"F(\iota)"] \arrow[d,swap,"F(\iota)"] &[-40pt] &[-40pt] F\big(B_{d_f}(r,t+2(\kappa+\epsilon))\big) \\ 
F([r]) \arrow[dr,swap,"\eta_{t+\kappa}(\text{[r]})"{yshift=2pt}] \arrow[rr,"F(\iota)"] & & F([r]) \arrow[u,swap,"F(\iota)"] \\[5pt]
& G\big(B_{d_g}(\Phi(r),t+\kappa+\epsilon)\big) \arrow[ur,swap,"\rho_{t+\kappa+\epsilon}(\text{[}\Phi(r)\text{]})"{yshift=2pt}]
\end{tikzcd}
\end{equation}
\normalsize
where $[r]$ and $[\Phi(r)]$ denote the topological path-components in the respective preimages. 

We now start with $\Phi(r)$. Similar to \cref{inclusions1} we obtain the following inclusions of open intervals in $\mathbb{R}$:
\begin{equation} \label{inclusions2}
\begin{aligned}
B\big(g\circ \Phi(r),t\big)&\subseteq B\big(f(r),t+\kappa\big) \\ &\subseteq B\big(f(r),t+\kappa+2\epsilon\big) \\ &\subseteq B\big(g\circ \Phi(r),t+2(\kappa+\epsilon)\big)
\end{aligned}
\end{equation}
for every $\kappa>\epsilon+\delta$. Therefore, by functoriality of $\mathcal{G}$ and the $\epsilon$-interleaving between $\mathcal{F}$ and $\mathcal{G}$ in \cref{cat_interleaving} we obtain:
\small
\begin{equation} \label{interleaving2step2}
\begin{tikzcd}[column sep=tiny]
\mathcal{G}\Big(B\big(g\circ\Phi(r),t\big)\Big) \arrow[rr,"\mathcal{G}(\iota)"] \arrow[d,swap,"\mathcal{G}(\iota)"] &[-40pt] &[-40pt] \mathcal{G}\Big(B\big(g\circ\Phi(r),t+2(\kappa+\epsilon)\big)\Big) \\
\mathcal{G}\Big(B\big(f(r),t+\kappa\big)\Big)\arrow[rr,"\mathcal{G}(\iota)"] \arrow[dr,swap,"(\beta_{t+\kappa}\text{,}\rho_{t+\kappa})"{yshift=2pt}] & & \mathcal{G}\Big(B\big(f(r),t+\kappa+2\epsilon\big)\Big) \arrow[u,swap,"\mathcal{G}(\iota)"] \\[5pt]
& \mathcal{F}\Big(B\big(f(r),t+\kappa+\epsilon\big)\Big) \arrow[ur,swap,"(\alpha_{t+\kappa+\epsilon}\text{,}\eta_{t+\kappa+\epsilon})"{yshift=2pt}]
\end{tikzcd}
\end{equation}
\normalsize and, by applying $\mathbf{dom}$, we get: 
\small
\begin{equation} \label{interleaiving_comp2}
\begin{tikzcd}[column sep=tiny]
\pi_0\Big(g^{-1}\big(B(g\circ\Phi(r),t)\big)\Big) \arrow[drr,"\pi_0(\iota)"] \arrow[dd,swap,"\pi_0(\iota)"] \\ &[-30pt] &[-30pt] \pi_0\Big(g^{-1}\big(B(g\circ\Phi(r),t+2(\kappa+\epsilon))\big)\Big) \\
\pi_0\Big(g^{-1}\big(B(f(r),t+\kappa)\big)\Big) \arrow[drr,"\pi_0(\iota)"] \arrow[dd,swap,"\beta_{t+\kappa}"] \\ & & \pi_0\Big(g^{-1}\big(B(f(r),t+\kappa+2\epsilon)\big)\Big) \arrow[uu,swap,"\pi_0(\iota)"] \\
\pi_0\Big(f^{-1}\big(B(f(r),t+\kappa+\epsilon)\big)\Big) \arrow[urr,swap,"\alpha_{t+\kappa+\epsilon}"]
\end{tikzcd}
\end{equation}
\normalsize
As in the previous case, we have that $B_{d_g}(\Phi(r),t)\subseteq g^{-1}\big(B(g\circ\Phi(r),t)\big)$ is the path-component of $\Phi(r)$, i.e.\ $[\Phi(r)]=B_{d_g}(\Phi(r),t)\in\pi_0\Big(g^{-1}\big(B(g\circ\Phi(r),t)\big)\Big)$ and, analogously, $[r]=B_{d_f}(r,t+\kappa+\epsilon)\in\pi_0\Big(f^{-1}\big(B(f(r),t+\kappa+\epsilon)\big)\Big)$. We now use the analog of \cref{alphaphi} for $U=B(f(r),t+\kappa)$, $\psi$ from the interleaving in \cref{reeb_interleaving} and $\beta$ to obtain:
\begin{equation}
\begin{tikzcd}
\pi_0\big(g^{-1}(B(f(r),t+\kappa))\big) \arrow[dr,"\pi_0(\psi)"] \arrow[dd,swap,"\beta"] \\[5pt] &[-30pt] \pi_0\big(f_{\epsilon}^{-1}(B(f(r),t+\kappa))\big) \arrow[dl,"h^{-1}"] \\[5pt] 
\pi_0\big(f^{-1}(B(f(r),t+\kappa+\epsilon))\big)
\end{tikzcd}  
\end{equation}
\normalsize
By \cref{hiso}, $h^{-1}\coloneqq \pi_0(p_1)\circ\pi_0(q)^{-1}$ and, by \cite{bauer} Section 3.2, $\Psi\coloneqq p_1\circ \tilde{\psi}_\delta$.
Since $\tilde{\psi}_\delta(\Phi(r))\in \overline{\psi}(B_{d_g}(\Phi(r),\delta))=q^{-1}\big(\psi(B_{d_g}(\Phi(r),\delta))\big)$, $\psi(B_{d_g}(\Phi(r),\delta))$ is path-connected and $\psi(\Phi(r))\in \psi(B_{d_g}(\Phi(r),\delta))$, we have that $[\psi(\Phi(r))]=[q(\tilde{\psi}_\delta(\Phi(r)))]=\pi_0(q)([\tilde{\psi}_\delta(\Phi(r))])$. Hence, $\pi_0(q)^{-1}([\psi(\Phi(r))])=[\tilde{\psi}_\delta(\Phi(r))]$. By definition of $\Psi$, we have $[\Psi(\Phi(r))]=[p_1\circ\tilde{\psi}_\delta(\Phi(r))]=\pi_0(p_1)([\tilde{\psi}_\delta(\Phi(r))])$. Therefore, $h^{-1}\circ \pi_0(\psi)([\Phi(r)])=h^{-1}([\psi(\Phi(r))])=\pi_0(p_1)\circ\pi_0(q)^{-1}([\psi(\Phi(r))])=\pi_0(p_1)([\tilde{\psi}_\delta(\Phi(r))])=[\Psi(\Phi(r))]=\beta_{t+\kappa}([\Phi(r)])$.

From \cref{func_dist} we get $\frac{1}{2}|d_f(r,\Psi\circ \Phi(r))|\leq 3(\epsilon+\delta)$. If $\kappa+\epsilon>6(\epsilon+\delta)$, then $B_{d_f}(r,6(\epsilon+\delta))\subseteq B_{d_f}(r,t+\kappa+\epsilon)\subseteq f^{-1}\big(B(f(r),t+\kappa+\epsilon)\big)$. Hence, since  $r$ and $\Psi\circ \Phi(r)\in B_{d_f}(r,t+\kappa+\epsilon)$ and $B_{d_f}(r,t+\kappa+\epsilon)$ is path-connected, $[r]=[\Psi\circ \Phi(r)]\in \pi_0\big(f^{-1}(B(f(r),t+\kappa+\epsilon))\big)$.

Therefore, starting with $B_{d_g}(\Phi(r),t)=[r]$, we obtain $\beta_{t+\kappa}\circ\pi_0(\iota)([\Phi(r)])=\beta_{t+\kappa}([\Phi(r)])=[\Psi\circ\Phi(r)]=[r]=B_{d_f}(r,t+\kappa+\epsilon)$. This implies that we can extract the following commutative diagram from \cref{interleaving2step2}:
\begin{equation} \label{barcode_interleaving2}
\begin{tikzcd}[column sep=tiny]
G\big(B_{d_g}(\Phi(r),t)\big) \arrow[rr,"G(\iota)"] \arrow[d,swap,"G(\iota)"] &[-40pt] &[-40pt] G\big(B_{d_g}(\Phi(r),t+2(\kappa+\epsilon))\big) \\ 
G([\Phi(r)]) \arrow[dr,swap,"\rho_{t+\kappa}(\text{[}\Phi(r)\text{]})"] \arrow[rr,"G(\iota)"] & & G([\Phi(r)]) \arrow[u,swap,"G(\iota)"] \\[5pt]
& F\big(B_{d_f}(r,t+\kappa+\epsilon)\big) \arrow[ur,swap,"\eta_{t+\kappa+\epsilon}(\text{[}r\text{]})"]
\end{tikzcd}
\end{equation}
Now we define 
\begin{equation}
\begin{aligned}
& \mu_t\colon F\big(B_{d_f}(r,t)\big)\rightarrow G\big(B_{d_g}(\Phi(r),t+\kappa+\epsilon)\big) \\
& \mu_t\coloneqq \eta_{t+\kappa}\circ F(\iota) \\
& \nu_t\colon G\big(B_{d_g}(\Phi(r),t)\big)\rightarrow F\big(B_{d_f}(r,t+\kappa+\epsilon)\big) \\
& \nu_t\coloneqq \rho_{t+\kappa}\circ G(\iota)
\end{aligned}
\end{equation}
Since $\mathcal{F}$ and $\mathcal{G}$ are $\epsilon$-interleaved we have the following commutative diagram
\begin{equation}
\begin{tikzcd}
\mathcal{F}\big(B(f(r),t)\big) \arrow[r,"\mathcal{F}(\iota)"] \arrow[d,swap,"(\alpha_{t}\text{,}\eta_{t})"] & \mathcal{F}\big(B(g\circ\Phi(r),t+\kappa)\big) \arrow[d,"(\alpha_{t+\kappa}\text{,}\eta_{t+\kappa})"] \\[10pt]
\mathcal{G}\big(B(f(r),t+\epsilon)\big) \arrow[r,"\mathcal{G}(\iota)"] & \mathcal{G}\big(B(g\circ\Phi(r),t+\kappa+\epsilon)\big)
\end{tikzcd}
\end{equation}
Following the component $B_{d_f}(r,t)$ we get
\begin{equation}
\begin{tikzcd}
F\big(B_{d_f}(r,t)\big) \arrow[r,"F(\iota)"] \arrow[d,swap,"\eta_{t}"] &[20pt] F\big([r]\big) \arrow[d,"\eta_{t+\kappa}"] \\[10pt]
G\big(\ldots\big) \arrow[r,"\mathcal{G}(\iota)"] & G\big(B_{d_g}(\Phi(r),t+\kappa+\epsilon)\big)
\end{tikzcd}
\end{equation}
This implies that the map $\mu_{t}=\eta_{t+\kappa}\circ F(\iota)=G(\iota)\circ \eta_{t}$. Analogously we obtain that $\nu_{t}=\rho_{t+\kappa}\circ G(\iota)=F(\iota)\circ\rho_{t}$. Moreover, for $s<t\in\mathbb{R}_{\geq 0}$, the following diagram and its analog for $\nu$ obviously commute:
\small
\begin{equation}
\begin{tikzcd}
F\big(B_{d_f}(r,s)\big) \arrow[r,"F(\iota)"] \arrow[d,swap,"\mu_s"] & F\big(B_{d_f}(r,t)\big) \arrow[d,"\mu_t"] \\
G\big(B_{d_g}(\Phi(r),s+\kappa+\epsilon)\big) \arrow[r,"G(\iota)"] & G\big(B_{d_g}(\Phi(r),t+\kappa+\epsilon)\big)
\end{tikzcd}
\end{equation}
\normalsize
Combining these results with \cref{barcode_interleaving1} and \cref{barcode_interleaving2}, we obtain the following $(\kappa+\epsilon)$-interleaving:
\small
\begin{equation}
\begin{tikzcd}[row sep=large]
F\big(B_{d_f}(r,t)\big) \arrow[d,swap,"F(\iota)"] \arrow[dr,"\mu_t"{xshift=20pt,yshift=-6pt}] & G\big(B_{d_g}(\Phi(r),t)\big) \arrow[d,"G(\iota)"] \arrow[dl,"\nu_t"{xshift=-25pt,yshift=4pt},crossing over] \\
F\big(B_{d_f}(r,t+\kappa+\epsilon)\big) \arrow[d,swap,"F(\iota)"] \arrow[dr,"\mu_{t+\kappa+\epsilon}"{xshift=20pt,yshift=-6pt}] & G\big(B_{d_g}(\Phi(r),t+\kappa+\epsilon)\big) \arrow[d,"G(\iota)"] \arrow[dl,"\nu_{t+\kappa+\epsilon}"{xshift=-39pt,yshift=4pt},crossing over] \\
F\big(B_{d_f}(r,t+2(\kappa+\epsilon))\big) & G\big(B_{d_g}(\Phi(r),t+2(\kappa+\epsilon))\big) 
\end{tikzcd}
\end{equation}
\normalsize
Hence, $BF(r)$ and $BG(\Phi(r))$ are $(\kappa+\epsilon)$-interleaved for every $\kappa>5\epsilon+6\delta$. Since $\text{inf}\{\kappa+\epsilon\mid \kappa>5\epsilon+6\delta\text{ and }\delta>0\}=6\epsilon$, we finally obtain $d_\mathrm{I}\big(BF(r),BG(\Phi(r))\big)\leq 6\epsilon$. By symmetry, we analogously obtain $d_\mathrm{I}\big(BF(\Psi(s)),BG(s)\big)\leq 6\epsilon$. Together with \cref{func_dist}, these bounds imply the theorem.

\end{document}